% This is the main '.tex' file for the work on asymptotic crossing probabilities.

\documentclass[12pt, a4paper]{article}

    \usepackage{amsmath, amsthm, amssymb, amsfonts}
    \usepackage{bbm}
    \usepackage{graphicx}
    \usepackage{leftidx}
%    \usepackage{simplemargins}

    %\allowdisplaybreaks[1]     % To allow eqnarray's ect to span pages

    %\doublespacing
    %\onehalfspacing

    %\titleformat{\chapter}{\normalfont\huge\bfseries}{ }{020pt}{\huge}

    \parskip = 1 em
    \parindent = 0 ex
    \raggedbottom
    \newtheorem{thm}{Theorem}
    
    \newtheorem{cor}[thm]{Corollary}
    \newtheorem{lem}[thm]{Lemma}

    \theoremstyle{remark}
    \newtheorem{rem}{Remark}

  \newtheoremstyle{example}{\topsep}{\topsep}%
    {}%         Body font
    {}%         Indent amount (empty = no indent, \parindent = para indent)
    {\bfseries}% Thm head font
    {}%        Punctuation after thm head
    {\newline}%     Space after thm head (\newline = linebreak)
    {\thmname{#1}\thmnumber{ #2}:\thmnote{ #3}}%         Thm head spec

  \theoremstyle{example}

    % Greek letters (just the common, more annoying ones)

    \newcommand{\gd}{\delta}
    \newcommand{\ep}{\varepsilon} % can't use \ge
    \newcommand{\gm}{\gamma} % can't use \gg
    \newcommand{\gk}{\kappa}
    \newcommand{\gl}{\lambda}

    \newcommand{\gs}{\sigma}

    \newcommand{\gz}{\zeta}

    \newcommand{\Exp}{\mathbb{E}}

    \newcommand{\ito}{It\^o}
    \newcommand{\levy}{L\'{e}vy}
    \newcommand{\gateaux}{G\^ateaux}

    \newcommand{\ind}{\mathbbm{1}}      %{1\hspace{-2.5mm}{1}}

    \renewcommand{\Pr}{\mathbb{P}}

% COMMANDS LIKELY TO JUST BE USEFUL FOR THIS TEX

    \newcommand{\partf}[2]{\frac{\partial{#1}}{\partial{#2}}}

    \newcommand{\Qr}{\mathbb{Q}}

    \newcommand{\km}{K^{-}}
    \newcommand{\kp}{K^{+}}

    \newcommand{\ov}[1]{\overline{#1}}
    \newcommand{\un}[1]{\underline{#1}}

    \newcommand{\ms}{\check{\mu}}
    \newcommand{\mh}{\hat{\mu}}

    \newcommand{\Qh}{{\Qr}^z_x}
    
    \newcommand{\Eqh}{\hat{\Exp}^z_x}
    
    \newcommand{\Ph}{{\Pr}^z_x}
 %   \newcommand{\Prb}[2]{\emph{P} \left( #1, #2 \right)}
 %   \newcommand{\Prbb}[2]{\emph{P} \left( #1, #2; B \right)}
%    \newcommand{\Prbm}[2]{P \left( #1, #2 \right)}  % Use in theorem type environments
%    \newcommand{\Prbbm}[2]{P \left( #1, #2; B \right)} % Use in theorem type environments

    %{\mbox{\emph{$\mathbf{W}$}}}

    \newcommand{\wta}{W_t^a}
    \newcommand{\wsa}{W_s^a}
    \newcommand{\wae}{W^{a, \ep}}
    \newcommand{\rwae}{\leftidx{_s}{W^{a, \ep}}{}}
    \newcommand{\wtae}{W_t^{a, \ep}}
    \newcommand{\wsae}{W_s^{a, \ep}}

    \newcommand{\rwm}{\leftidx{_s}{W^+}{}}
    \newcommand{\wma}{W^{+,a}}
    \newcommand{\rwma}{\leftidx{_s}{\wma}{}}

    \newcommand{\pae}{p^{a,\ep}}
    \newcommand{\aep}{A_{\ep}}

    %\DeclareMathOperator*{\limsup}{lim\,sup}

% BOUNDARIES

    \newcommand{\ti}[1]{\tilde{#1}}

    \bibliographystyle{acm}

    \textwidth = 160 mm
    \textheight = 230 mm
    \oddsidemargin = 0 mm
    \evensidemargin = 0 mm
    \topmargin = -5 mm
    \headheight = 0 mm
    \headsep = 10 mm

\begin{document}

    \title{On Boundary Crossing Probabilities for Diffusion Processes}
    \author{K. Borovkov\footnote{Department of Mathematics and Statistics, University of Melbourne, K.Borovkov@ms.unimelb.edu.au} ~\&~A.N. Downes\footnote{Department of Mathematics and Statistics, University of Melbourne, A.Downes@ms.unimelb.edu.au}}
    \date{}

    \maketitle

    \begin{abstract}
    In this paper we establish a relationship between the asymptotic form of conditional boundary crossing probabilities and first passage time densities for diffusion processes. We show that, under the assumption that the conditional probability that our diffusion $(X_s, s \geq 0)$ doesn't cross an upper boundary $g(\mbox{\boldmath $\cdot$})$ prior to time $t$ given that $X_t = z$ behaves as $(a + o(1))(g(t) - z)$ as $z \uparrow g(t)$, there exists an expression for the first passage time density of $g(\mbox{\boldmath $\cdot$})$ at time $t$ in terms of the coefficient $a$ of the leading asymptotic term and the transition density of the diffusion process $(X_s)$. This assumption is shown to hold true under mild conditions. We also derive a relationship between first passage time densities for diffusions and for their corresponding diffusion bridges. Finally, we prove that the probability of not crossing the boundary $g(\mbox{\boldmath $\cdot$})$ on the fixed time interval $[0,T]$ is a (\gateaux) differentiable function of $g(\mbox{\boldmath $\cdot$})$ and give an explicit representation of the derivative.
    \end{abstract}

{\em Keywords}: diffusion processes; boundary crossing; first crossing time density.\\
{\em 2000 Mathematics Subject Classification}: Primary 60J60; Secondary 60J70, 60J65.

\section{Introduction}
\label{sec:intro}

    Consider a diffusion process $(U_s, s\geq 0)$ and an upper boundary $g(s)$ (so that $g(0) > U_0$), and, for a fixed $t>0$, denote by $(U_s^z)$ the process $(U_s)$ conditioned on $U_t = z$. The first main result of this paper is an explicit relationship between the asymptotic behaviour of the boundary crossing probabilities for this bridge diffusion process $(U_s^z)$ as $z \uparrow g(t)$ and the first passage density of the boundary at $t$ by the unconstrained process. To the best of our knowledge, this relationship has not been observed and described in the literature.

    We assume that our time-homogeneous diffusion process $(U_s)$ satisfies the stochastic differential equation
        \begin{align}
        \label{eq:orig_diff}
            dU_s = \nu(U_s) ds + \gs(U_s) dW_s, \qquad s \geq 0,
        \end{align}
    where $(W_s)$ is the standard Brownian motion and $\gs(y)$ is continuously differentiable and non-zero inside the diffusion interval (that is, the smallest interval $I \subseteq \mathbb{R}$ such that, for all $s \geq 0$, $U_s \in I$ almost surely). This can be extended to some time-inhomogeneous processes, see Remark~\ref{rem:time_inhom_ext} below. We work with the transformed process $X_s := F(U_s)$, where
        \begin{align}
        \label{eq:F_transform}
            F(y) := \int_{y_0}^y \frac{du}{\gs(u)}
        \end{align}
    for some $y_0$ inside the diffusion interval of $(U_s)$. This process satisfies the stochastic differential equation
        \begin{align}
        \label{eq:transformed_proc}
            dX_s = \mu(X_s) ds + dW_s
        \end{align}
    with $\mu(y)$ given by the composition
        \[  \mu(y) = \left( \frac{\nu}{\gs} - \frac{1}{2} \gs' \right) \circ F^{-1}(y), \]
    see e.g.\ \cite{Rogers_xx85}, p.~161. Conditions mentioned throughout refer to the transformed process $(X_s)$  and its drift coefficient $\mu$; see Remark~\ref{rem:orig_diff_dens} for further details on the relationship between results for $(X_s)$ and $(U_s)$.

    In order to establish the desired relationship between the first passage time and the asymptotic conditional crossing probabilities, we rely in Theorem~\ref{th:relate_probs} on assumption \eqref{eq:asymp_assump} describing the asymptotic form of the boundary crossing probability for the bridge process. We then show in Theorem~\ref{th:asymp_holds} that the assumption actually holds under rather mild conditions.

    These results immediately extend to first passage time densities for diffusion bridge processes. We show that if, for a given boundary, we know the first passage time density for the unconstrained process, then we immediately have the corresponding density for the bridge process as well (Theorem~\ref{thm:bridge_fpt}). For the Brownian motion case, the results of Theorems~\ref{th:relate_probs} and~\ref{thm:bridge_fpt} are closely related to those in \cite{Durbin_0385} and \cite{Durbin_etal_0692} (see Remarks~\ref{rem:durbin1} and~\ref{rem:durbin2}).

    Finally, using the methods developed when proving Theorem~\ref{th:asymp_holds}, we also show that the boundary crossing probability is \gateaux~differentiable as a function of the boundary (Theorem~\ref{thm:diff_bdy}) and give an explicit representation of the derivative.

    The paper is structured as follows. In Section~\ref{sec:main_res} we present the main results. Sections~\ref{sec:relate_proof} to~\ref{sec:proof_diff} contain the proofs of the results, Section~\ref{sec:proof_asymp} presenting a weak convergence result which may be of independent interest. Section~\ref{sec:examples} gives some examples illustrating our results in the Brownian motion case.

%%%%%%%%%%%%%%%%%%%%%%%%%%%%%%%%%%%%%%%%%%%%%%%%%%%%%
%
%             MAIN RESULTS
%
%%%%%%%%%%%%%%%%%%%%%%%%%%%%%%%%%%%%%%%%%%%%%%%%%%%%%

\section{Main Results}
\label{sec:main_res}

    We denote by $\Pr_x$ and $\Exp_x$ probabilities and expectations conditional on the process $(X_s)$ starting at the point $X_0 = x$. Where no subscript is present, either conditioning is mentioned explicitly or the process is assumed to start at zero. We also use $\ind_A$ to denote the indicator of the event $A$.

    For the process $(X_s)$ and boundary $g(s)$ with $g(0) > X_0$, define the first passage time
        \begin{align}
        \label{eq:tau_def}
            \tau := \inf\{ s>0 : X_s > g(s)\}
        \end{align}
    and the transition density
        \[  p(s,x,z) := \partf{}{z} \Pr_x (X_s \leq z). \]

    The following theorem establishes a relationship between the asymptotic form of the conditional crossing probability and the density of $\tau$.

    \begin{thm}
    \label{th:relate_probs}
        Assume that, for some $0 \leq a < b < \infty$ and $K^{\pm} \in \mathbb{R}$, the boundary $g(s)$ satisfies $g(0) >x$,
      \begin{align}
      \label{eq:lip_assump}
          -\km h \leq g(t+h) - g(t) \leq \kp h, \qquad a < t < t + h < b,
      \end{align}
        and there exists a function $f(t,x)$ continuous in $t$ such that, for $t \in (a,b)$,
            \begin{align}
            \label{eq:asymp_assump}
                \Pr_x\left(\sup_{0 \leq s \leq t} (X_s - g(s)) < 0 \,\Big|\, X_t = z\right) = (f(t,x) + o(1)) (g(t) - z)
            \end{align}
    as $z \uparrow g(t)$. Then $\tau$ has a density in the interval $(a,b)$ which is given by
            \begin{align}
            \label{eq:asymp_rel}
                p_{\tau}(t) = \frac{1}{2} f(t,x) p(t,x,g(t)), \qquad  a < t < b.
            \end{align}
    \end{thm}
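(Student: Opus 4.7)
The plan is to show that the survival function $P(t):=\Pr_x(\tau>t)$ is differentiable on $(a,b)$ with derivative $-\tfrac12 f(t,x)p(t,x,g(t))$, so that $\tau$ admits the asserted density. Fix $t\in(a,b)$ and $h>0$ small with $t+h<b$. Conditioning on $X_t$ and applying the Markov property at time $t$, one writes
\begin{align*}
 P(t)-P(t+h)=\int_{-\infty}^{g(t)}\Pr_x(\tau>t\mid X_t=z)\,p(t,x,z)\,R(t,z,h)\,dz,
\end{align*}
where $R(t,z,h):=\Pr_z\bigl(\sup_{0<s\le h}(X_s-g(t+s))\ge 0\bigr)$ is the probability that the diffusion issuing from $z$ crosses the time-shifted boundary $g(t+\cdot)$ within time $h$. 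Assumption \eqref{eq:asymp_assump} pins down the first factor: it equals $(f(t,x)+o(1))(g(t)-z)$ as $z\uparrow g(t)$, and continuity of $p$ in its third argument handles the second.

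The core of the argument is a short-time Brownian comparison for $R(t,z,h)$. Decomposing $X_s=z+W_s+\int_0^s\mu(X_r)\,dr$ and using \eqref{eq:lip_assump} to sandwich $g(t+s)-g(t)$ linearly between $-\km s$ and $\kp s$, for $z$ in a shrinking neighbourhood of $g(t)$ the event $\{X_s\ge g(t+s)\}$ becomes, up to errors that are $O(h)$ in the crossing threshold, the event that a standard Brownian motion crosses the level $u:=g(t)-z$ by time $h$. A Girsanov reweighting (to remove the locally bounded drift $\mu$) combined with the reflection principle then yields two-sided bounds of the form
\begin{align*}
 2\bar\Phi\bigl((u+Ch)/\sqrt h\bigr)(1+o(1))\le R(t,z,h)\le 2\bar\Phi\bigl((u-Ch)/\sqrt h\bigr)(1+o(1)),
\end{align*}
uniformly in $z$ close to $g(t)$, where $C$ depends only on local bounds on $\mu$ and on $\km,\kp$. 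For $z$ bounded away from $g(t)$, $R(t,z,h)$ decays super-polynomially in $h$ by a standard small-time large-deviation estimate, so the corresponding tail contribution to the integral is $o(h)$.

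Assembling these ingredients, substituting $u=g(t)-z$ in the integral, and using the elementary identity $\int_0^\infty 2u\bar\Phi(u/\sqrt h)\,du=h/2$ (which is where the factor $\tfrac12$ in the statement originates), one obtains
\begin{align*}
 P(t)-P(t+h)=\tfrac12 f(t,x)p(t,x,g(t))\,h+o(h),\qquad h\downarrow 0.
\end{align*}
A symmetric computation treats $P(t-h)-P(t)$, and the assumed continuity of $t\mapsto f(t,x)p(t,x,g(t))$ on $(a,b)$ upgrades these one-sided limits to true differentiability, identifying the density of $\tau$ with \eqref{eq:asymp_rel}.

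The main obstacle I anticipate is the Brownian comparison for $R(t,z,h)$: one needs estimates that are uniform over $z$ in a neighbourhood of $g(t)$ shrinking with $h$, with error terms controlled well enough that, after being multiplied by the weight $u=g(t)-z$ (itself of order $\sqrt h$ where the mass of the integral concentrates) and integrated, they still produce an $o(h)$ remainder. The two-sided Lipschitz hypothesis \eqref{eq:lip_assump} is used precisely to keep the boundary tilt linear in $h$; coupled with local boundedness of $\mu$ and the Girsanov removal of drift, it is what makes the Brownian scaling $u\sim\sqrt h$ compatible with the $o(h)$ targets.
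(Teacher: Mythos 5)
Your proposal follows essentially the same route as the paper's proof: condition on $X_t=z$, factor the conditional crossing probability over $(t,t+h]$ via the Markov property into $\Pr_x(\tau>t\mid X_t=z)$ (handled by \eqref{eq:asymp_assump}) times a short-time crossing probability $R(t,z,h)$, sandwich the boundary by linear functions using \eqref{eq:lip_assump}, compare $R$ with a Brownian crossing probability of a linearly tilted level, and integrate $u\ov{\Phi}(u/\sqrt h)$ to recover the factor $\tfrac12 h$ with the tail of the $z$-integral contributing $o(h)$. The only cosmetic difference is that you package the drift removal as a Girsanov reweighting and write the bounds as $2\ov{\Phi}((u\mp Ch)/\sqrt h)(1+o(1))$, whereas the paper absorbs the local bounds on $\mu$ and the Lipschitz slopes into the two $\ov{\Phi}$ terms (with the image/exponential prefactor) of the exact linear-boundary formula, each producing $h/4$; these are the same estimate and lead to the same $o(h)$-error accounting you sketch.
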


    \begin{rem}
    \label{rem:durbin1}
         In \cite{Durbin_0385} it is shown that, for a standard Brownian motion $(W_t)$ (so that $x = W_0 = 0$) and a boundary $g(s)$ which is continuous in $s \in [0, t]$ and is left differentiable at $t$, the first passage time density at $t$ is given by
    \[  p_{\tau}(t) = b(t) p(t,0,g(t)), \]
where
    \begin{align}
    \label{eq:durbin_bt}
        b(t) := \lim_{s \uparrow t} \frac{1}{t-s} \Exp \left[ \ind_{ \{ \tau \geq s \} } (g(s) - W_s) \,\Big|\, W_t = g(t) \right].
    \end{align}
Thus with our Theorem~\ref{th:relate_probs} we have that, for the Brownian motion,
    \[  b(t) = \frac{1}{2} \lim_{z \uparrow g(t)} \frac{1}{g(t) - z} \Pr \left(\sup_{0 \leq s \leq t} (W_s - g(s)) < 0 \,\Big|\, W_t = z\right).    \]
Our Theorem~\ref{th:relate_probs} may be viewed as an alternative expression of the results of \cite{Durbin_0385} for Brownian motion and an extension thereof to general diffusion processes.
    \end{rem}

    \begin{rem}
    \label{rem:ken_rem}
        Representation \eqref{eq:asymp_rel} can also be viewed as an extension (to the case of general diffusion processes and curvilinear boundaries) of the well-known Kendall's identity for spectrally-negative \levy~processes $(X_s)$: if $g(s) \equiv y = $ const, $X_0 = x < y$ and $(X_s)$ has a transition density, then (see e.g.\ \cite{Kendall_xx57, Borovkov_etal_0801} and references therein)
            \begin{align}
            \label{eq:ken_rel}
                p_{\tau}(t) = \frac{y-x}{t} p(t,x,y), \qquad t>0.
            \end{align}
    \end{rem}

    \begin{rem}
    \label{rem:orig_diff_dens}
        Assume that, for the original diffusion $(U_s)$ given in \eqref{eq:orig_diff}, one has the asymptotic expression
            \[  \Pr_u \left( \sup_{0 \leq s \leq t} (U_s - g(s)) < 0 \,\Big|\, U_t = z \right) = (f(t,u)+ o(1))(g(t) - z).  \]
        Then one can easily check using Theorem~\ref{th:relate_probs} that the density of the first passage time of $(U_s)$ satisfies
            \[  p_{\tau}(t) = \frac{1}{2}f(t,u) \gs^2(g(t)) p_U(t,u,g(t)),  \]
        where $p_U(s,x,z)$ denotes the transition density for the process $(U_s)$.
    \end{rem}

    \begin{rem}
        We can extend the above to a class of diffusion processes with time-dependent drift satisfying \eqref{eq:asymp_assump}, see Remark~\ref{rem:time_inhom_ext} below.
    \end{rem}

    Under mild conditions we can establish that \eqref{eq:asymp_assump} holds for a given diffusion and boundary.

\begin{thm}
\label{th:asymp_holds}
    Let $(X_s)$ be a non-explosive diffusion satisfying~\eqref{eq:transformed_proc} with diffusion interval $\mathbb{R}$ and with $\mu \in C^1$ such that \eqref{eq:transformed_proc} has a unique strong solution and such that there exists a function $Q(y)$ satisfying
        \begin{align}
        \label{eq:Q_def}
            \mu'(y) + \mu^2(y) \geq -Q(y),  \qquad y \in \mathbb{R},
        \end{align}
and
        \begin{align}
        \label{eq:Q_bound}
            \limsup_{y \rightarrow -\infty} \frac{Q(y)}{y^2} < \frac{4}{t^2}.
        \end{align}
Let $g(s)$ be twice continuously differentiable for $s \in (a,b)$, $0 \leq a < b$. Then there exists a continuous in $t$ function $f(t,x)$ such that, for $t \in (a,b)$, \eqref{eq:asymp_assump} holds as $z \uparrow g(t)$.
\end{thm}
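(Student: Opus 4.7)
My approach would be to reduce the problem to a Brownian-bridge non-crossing computation via Girsanov, extract the linear boundary-layer behaviour by time reversal and a short-time reflection argument, and then identify $f(t,x)$ as a ratio of continuous ingredients. Set $M(y) := \int_{y_0}^{y}\mu(u)\,du$. By Girsanov's theorem and It\^o's formula applied to $M(W_t)$, the stochastic integral in the exponential martingale can be eliminated, giving the Radon-Nikodym derivative in the form $Z_t = e^{M(W_t)-M(x)}\phi(W)$ with $\phi(W) := \exp\bigl(-\tfrac12\int_0^t(\mu^2+\mu')(W_s)\,ds\bigr)$. Writing $(X_s)$-probabilities as expectations of $Z_t$ against the Brownian law and dividing numerator and denominator by $e^{M(z)-M(x)}p_W(t,x,z)$, I obtain
\begin{equation*}
\Pr_x\Bigl(\sup_{0 \leq s \leq t}(X_s - g(s)) < 0 \,\Big|\, X_t = z\Bigr)
 = \frac{\Exp\bigl[\phi(\overline{W})\,\ind_{\sup_s(\overline{W}_s - g(s)) < 0}\bigr]}{\Exp\bigl[\phi(\overline{W})\bigr]},
\end{equation*}
where $\overline{W}$ is the standard Brownian bridge from $x$ at time $0$ to $z$ at time $t$. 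Condition \eqref{eq:Q_def} gives the pointwise envelope $\phi(\overline{W}) \leq \exp\bigl(\tfrac12\int_0^t Q(\overline{W}_s)\,ds\bigr)$, and \eqref{eq:Q_bound}, combined with the Gaussian tails of the bridge marginals (with variance $s(t-s)/t\leq t/4$), makes this upper bound uniformly integrable in $z$ close to $g(t)$; this is what justifies all subsequent limit exchanges.

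Next I would time-reverse: $V_u := \overline{W}_{t-u}$ is a Brownian bridge from $z$ to $x$, and the non-crossing event becomes $\{V_u < g(t-u)\text{ for all } u \in [0,t]\}$. Since $z \uparrow g(t)$, the risk of crossing is localised near $u=0$. Fix a small $\eta>0$ and condition on $V_\eta$; by the Markov property, the pieces of $V$ on $[0,\eta]$ and on $[\eta,t]$ decouple. With $c := g(t)-z$ and the expansion $g(t-u) = g(t) - g'(t)u + O(u^2)$, the short-time event reduces to a Brownian motion with small drift staying below the level $c$, and the reflection identity $\Pr(\sup_{u \leq \eta} B_u < c) = 2\Phi(c/\sqrt{\eta}) - 1 \sim c\sqrt{2/(\pi\eta)}$ (with the appropriate drift and bridge-interpolation corrections) yields the linear-in-$c$ behaviour, uniformly in $V_\eta$ over any sufficiently large truncation set. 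Since $\mu^2+\mu'$ is continuous, $\phi$ is close to $1$ on $[0,\eta]$, so to leading order it can be pulled through the short-window expectation.

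On $[\eta, t]$ the non-crossing event weighted by $\phi$ is a continuous function of the endpoint $z$, so passage to the limit $z \uparrow g(t)$ gives a non-degenerate positive value. Combining the short- and long-time contributions and then letting $\eta \downarrow 0$ (the factor $\sqrt{2/(\pi\eta)}$ combines with the conditional expectation of the long-window non-crossing probability at $V_\eta$ to yield an $\eta$-independent constant) produces an asymptotic of the form $(f(t,x)+o(1))(g(t)-z)$, with $f(t,x)$ represented explicitly as a bridge-weighted non-crossing expectation ending at $g(t)$ divided by the corresponding normalising expectation. Continuity of $f(t,x)$ in $t$ then follows from the continuous dependence on $t$ of each ingredient: the coefficients $\mu,\mu'$, the $C^2$ boundary $g,g',g''$, and the Gaussian bridge density.

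The main obstacle is controlling these limit interchanges in the presence of the unbounded Girsanov factor $\phi$ while both the bridge measure and the conditional event depend on $z$. The envelope $\exp\bigl(\tfrac12\int_0^t Q(\overline{W}_s)\,ds\bigr)$ has finite expectation precisely under the threshold in \eqref{eq:Q_bound}, because $\int_0^t Q(\overline{W}_s)\,ds$ inherits Gaussian tails whose exponential moments blow up exactly when $\limsup_{y\to-\infty}Q(y)/y^2 \geq 4/t^2$. At the same time, the short-time reflection estimate must be established uniformly in the (random) value $V_\eta$; this entails a truncation whose error has to be absorbed by the same Gaussian envelope. Making these two strands compatible -- uniform linearity in $c = g(t)-z$ on one hand, uniform integrability in $V_\eta$ on the other -- is the technical core of the argument, and is presumably where the weak-convergence result promised in Section~\ref{sec:proof_asymp} does the heavy lifting.
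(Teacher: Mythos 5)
Your Girsanov reduction and the identity
\begin{equation*}
\Pr_x\Bigl(\sup_{0 \leq s \leq t}(X_s - g(s)) < 0 \,\Big|\, X_t = z\Bigr)
 = \frac{\Exp\bigl[\phi(\overline{W})\,\ind_{A}\bigr]}{\Exp\bigl[\phi(\overline{W})\bigr]}
\end{equation*}
are correct and match Step~1 of the paper (your normalisation by $\Exp[\phi(\overline{W})]$ is equivalent to the paper carrying the ratio $q/p$ and $e^{G(z)-G(x)}$ explicitly), and your reading of conditions \eqref{eq:Q_def}--\eqref{eq:Q_bound} as a uniform-integrability envelope for $\phi$ is exactly the role they play. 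From there, however, your route diverges genuinely from the paper's. You keep the curved boundary and do a Durbin-style boundary-layer argument after time reversal: cut at a small $\eta$, extract linearity in $c=g(t)-z$ from a short-window reflection estimate, treat the long-window contribution as a continuous functional, and then send $\eta\downarrow 0$. The paper instead performs a \emph{second} Girsanov transform (absorbing the drift $-g'$) so that the canonical process is a genuine Brownian bridge and the non-crossing event becomes $\{\sup_s X_s<0\}$; the linear-in-$\tilde z$ factor is then extracted \emph{exactly} from the bridge formula $\ov{\Qr}_{\tilde x}^{\tilde z}(\tilde A)=1-e^{-2\tilde x\tilde z/t}$ with no cutoff, after which one time-reverses and invokes a bespoke weak-convergence result (Theorem~\ref{th:weak_conv}) together with a tail estimate on $\inf_s X_s$ under the conditioned bridge to get uniform integrability. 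Your approach buys you the avoidance of the second change of measure, but at the price of a two-parameter limit ($c\to 0$ followed by $\eta\downarrow 0$) whose exchange and $\eta$-independence are precisely the delicate part; the paper's single exact factorisation sidesteps that bookkeeping.

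One concrete slip worth fixing in your sketch: once you condition on $V_\eta$ (your Markov decoupling), the piece of $V$ on $[0,\eta]$ is a Brownian bridge from $z$ to $V_\eta$, so the relevant short-window formula is the bridge linear-boundary probability $1-\exp\{-\tfrac{2}{\eta}(g(t)-z)(g(t-\eta)-V_\eta)\}\sim \tfrac{2}{\eta}(g(t-\eta)-V_\eta)\,c$, whose coefficient depends on $V_\eta$; the free-Brownian-motion reflection identity $2\Phi(c/\sqrt{\eta})-1\sim c\sqrt{2/(\pi\eta)}$ that you quote is what one gets \emph{without} conditioning on $V_\eta$. You cannot simultaneously invoke the Markov decoupling at $\eta$ and use the unconditioned reflection constant. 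With the bridge formula the $\eta$-cancellation you anticipate still works (the $\tfrac{2}{\eta}$ prefactor, the $O(\sqrt{\eta})$ spread of $g(t-\eta)-V_\eta$, and the linear vanishing of the long-window non-crossing probability near the boundary conspire to give an $\eta$-independent limit), but making this uniform in $V_\eta$ over the relevant $O(\sqrt{\eta})$ scale, and showing the $\eta\downarrow 0$ limit exists and is continuous in $t$, is exactly the content that the paper packages into Theorem~\ref{th:weak_conv}; your plan would have to reprove a comparable result in the $\eta$-cutoff framework.
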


At this point it is worth noting that, for a linear $g(s)$ with $g(0) > x$ and for $z \leq g(t)$,
    \begin{align}
    \label{eq:bm_lin_bdy}
        \Pr_x\left( \sup_{0 \leq s \leq t} (W_s -g(s)) < 0 \,\Big|\, W_t = z \right) &= 1- \exp \left\{ -\frac{2}{t} (g(0)-x) (g(t)-z) \right\}\notag\\
            &= \frac{2}{t} (g(0) -x + o(1))(g(t)-z)
    \end{align}
as $z \uparrow g(t)$ (see e.g.\ \cite{Borodin_etal_xx02}, pp.~64--67). This serves as a motivation for the proof of Theorem~\ref{th:asymp_holds} in Section~\ref{sec:proof_asymp}, see Section~\ref{sec:examples} for further discussion of this example.

These results also extend to bridge processes, or pinned diffusions. Consider the first crossing time density of the processes $(X_s^y)$, defined as the process $(X_s)$ given in \eqref{eq:transformed_proc} conditioned to be at $y$ at time $T$. Notice that, due to the Markov property, if we restrict our attention to the time interval $[0,t]$, $t < T$, then there will be no difference between the distribution of the process $(X_s^y, 0 \leq s \leq t)$ conditioned on being at $z$ at time $t$ and that of the process $(X_s, 0 \leq s \leq t)$ conditioned on $X_t = z$. That is, for any Borel set $B \subset C[0,t]$,
    \[  \Pr_x \left(X^y_{\mbox{\boldmath $\cdot$}} \in B \,\Big|\, X_t^y = z \right) = \Pr_x \left(X_{\mbox{\boldmath $\cdot$}} \,\Big|\, X_t = z \right).  \]

In particular, this implies that for the pinned processes the function $f(t,x)$ is the same as for the unconstrained process. So, under the conditions of Theorem~\ref{th:asymp_holds},
    \begin{align*}
        \Pr_x \left(\sup_{0 \leq s \leq t} (X_s^y - g(s)) < 0 \,\Big|\, X_t^y = z\right) &= \Pr_x \left(\sup_{0 \leq s \leq t} (X_s - g(s)) < 0 \,\Big|\, X_t = z\right)\\
        &= (f(t,x)+o(1)) (g(t) - z).
    \end{align*}

Using this observation and that of Remark~\ref{rem:time_inhom_ext} at the end of Section~\ref{sec:relate_proof}, we immediately have the following result. Fix $y \in \mathbb{R}$ and denote by $p_{\tau}^y$ the density of the first hitting time \eqref{eq:tau_def} with $(X_s)$ replaced with $(X_s^y)$ and by $p^y(v,w,s,z)$ the transition density of $(X_s^y)$:
    \[  p^y(v,w,s,z) := \Pr_x \left( X_s^y \in dz \,|\, X_v^y = w \right)/dz,   \qquad 0 \leq v \leq s < T. \]

\begin{thm}
\label{thm:bridge_fpt}
    Under the conditions of Theorem~{\em \ref{th:relate_probs}}, the first crossing time density $p_{\tau}^y(t)$ of the bridge process $(X_t^y)$ satisfies
    \[  p_{\tau}^y(t) = \frac{1}{2}f(t,x) p^y(0,x,t,g(t)), \qquad a < t < b.    \]
\end{thm}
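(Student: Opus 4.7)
The plan is to apply Theorem \ref{th:relate_probs}, in its time-inhomogeneous version announced in Remark \ref{rem:time_inhom_ext}, to the bridge diffusion $(X_s^y,\,0\le s\le T)$ in place of $(X_s)$. Two ingredients are needed: first, verifying that the asymptotic expansion \eqref{eq:asymp_assump} holds for the bridge with the same coefficient $f(t,x)$ that appears for the unconstrained process; second, checking that the bridge falls within the time-inhomogeneous extension of the theorem. The Lipschitz-type assumption \eqref{eq:lip_assump} is imposed on the boundary alone and so transfers automatically.

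For the first ingredient I would invoke the Markov-property identity displayed in the paragraph preceding the theorem: for every Borel set $B\subset C[0,t]$ and every $t<T$,
\[
\Pr_x\!\left(X^y_\cdot \in B \,\Big|\, X_t^y = z\right) = \Pr_x\!\left(X_\cdot \in B \,\Big|\, X_t = z\right).
\]
Choosing $B=\{\sup_{0\le s\le t}(\xi_s - g(s))<0\}$ on the canonical path space shows that the conditional non-crossing probability for the bridge, given $X_t^y = z$, coincides with the corresponding quantity for the unconstrained diffusion. Hence \eqref{eq:asymp_assump} holds for $(X_s^y)$ verbatim, with the very same continuous function $f(t,x)$ as $z\uparrow g(t)$.

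For the second ingredient, $(X_s^y)$ is a time-inhomogeneous Markov diffusion obtained from \eqref{eq:transformed_proc} by the Doob $h$-transform with $h(s,x) = p(T-s,x,y)$, giving the SDE
\[
dX_s^y = \bigl(\mu(X_s^y) + \partial_x \log p(T-s,X_s^y,y)\bigr)\,ds + dW_s,\qquad 0\le s<T.
\]
This fits the framework of Remark \ref{rem:time_inhom_ext}, so the conclusion of Theorem \ref{th:relate_probs} applies with $p(t,x,\cdot)$ replaced by the bridge transition density $p^y(0,x,t,\cdot)$, yielding $p_\tau^y(t) = \tfrac12 f(t,x)\,p^y(0,x,t,g(t))$ on $(a,b)$. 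The only subtlety, and the main obstacle, is checking that the proof of Theorem \ref{th:relate_probs} in Section \ref{sec:relate_proof} goes through unchanged when $\mu$ is allowed to depend smoothly on time in the manner imposed here by the $h$-transform; once this is granted, the rest is a bookkeeping substitution.
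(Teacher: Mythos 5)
Your proposal follows the same route as the paper: you use the Markov-property identity to show that the bridge's conditional non-crossing probability (given $X_t^y = z$) coincides with that of the unconstrained process, so \eqref{eq:asymp_assump} holds for the bridge with the same $f(t,x)$; you then invoke Remark~\ref{rem:time_inhom_ext} to apply the time-inhomogeneous version of Theorem~\ref{th:relate_probs} to the bridge, with $p$ replaced by $p^y$. The paper's proof is exactly these two observations stated more briefly (it does not write out the $h$-transform SDE, but relies on the same fact that the bridge is a time-inhomogeneous unit-diffusion-coefficient process covered by the remark), so your argument is correct and essentially identical; the caveat you flag at the end is also left unproved in the paper, which simply asserts the extension in the remark.
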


\begin{rem}
\label{rem:durbin2}
In \cite{Durbin_etal_0692}, the authors prove an extension of the results in \cite{Durbin_0385} (see Remark~\ref{rem:durbin1}) to Brownian bridges. Denote by $q^y(v,w,s,z)$ the transition density of the Brownian bridge that finishes at $y$ at time $1$. Then, for $(X_t^y)$ a pinned Brownian motion and a continuously differentiable boundary $g(s)$, \cite{Durbin_etal_0692} gives
    \[  p_{\tau}^y(t) = b(t)q^y(0,x,t,g(t)), \qquad 0 < t < 1,  \]
with $b(t)$ defined in~\eqref{eq:durbin_bt}. Thus the result of Theorem~\ref{thm:bridge_fpt} may be regarded as an alternative expression of the results in \cite{Durbin_etal_0692} for Brownian motion and an extension to more general diffusion processes.
\end{rem}

From Theorem~\ref{thm:bridge_fpt} and the representation
    \[  p^y(0,x,t,g(t)) = \frac{p(t,x,g(t))p(T-t, g(t), y)}{p(T,x,y)},  \]
we have the following relationship between the first crossing density of $(X_s)$ and that of the corresponding bridge process.

\begin{cor}
\label{cor:fpt_rels}
    Assume that, for $t \in (a,b)$, \eqref{eq:asymp_assump} holds as $z \uparrow g(t)$. Then the first crossing time densities $p_{\tau}(t)$ and $p_{\tau}^y(t)$ satisfy
        \[  p_{\tau}^y(t) = \frac{p(T-t, g(t), y)}{p(T,x,y)}p_{\tau}(t), \qquad a < t < b.  \]
\end{cor}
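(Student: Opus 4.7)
The plan is to obtain the identity by a direct substitution, combining the two expressions for the first crossing densities provided by Theorems \ref{th:relate_probs} and \ref{thm:bridge_fpt} with the Bayes-type decomposition of the bridge transition density displayed immediately before the statement of the corollary.

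First I would invoke Theorem \ref{th:relate_probs}, which under assumption \eqref{eq:asymp_assump} gives $p_{\tau}(t) = \tfrac{1}{2} f(t,x) p(t,x,g(t))$ on $(a,b)$. Next, the observation in the paragraph preceding Theorem \ref{thm:bridge_fpt} shows that the conditional law of $(X_s^y,\,0\le s\le t)$ given $X_t^y = z$ coincides with that of $(X_s,\,0\le s\le t)$ given $X_t = z$, so the asymptotic coefficient $f(t,x)$ is identical for the pinned process. Hence Theorem \ref{thm:bridge_fpt} applies and yields
\begin{align*}
p_{\tau}^y(t) = \tfrac{1}{2} f(t,x)\, p^y(0,x,t,g(t)), \qquad a < t < b.
\end{align*}

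Finally I would substitute the stated identity
\begin{align*}
p^y(0,x,t,g(t)) = \frac{p(t,x,g(t))\, p(T-t, g(t), y)}{p(T,x,y)},
\end{align*}
which is a standard consequence of the Markov property together with Bayes' rule (the bridge density at time $t$ is the unconditional density at $(t,g(t))$ multiplied by the ratio of the density of reaching $y$ at $T$ from $g(t)$ to that of reaching $y$ at $T$ from $x$). Substituting into the expression for $p_{\tau}^y(t)$ and recognising $\tfrac{1}{2} f(t,x) p(t,x,g(t)) = p_{\tau}(t)$ immediately yields the claimed relation.

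Since every nontrivial analytic ingredient is supplied by Theorems \ref{th:relate_probs} and \ref{thm:bridge_fpt} and by the Chapman--Kolmogorov-type factorisation of the bridge density, there is no real obstacle: the proof is a one-line algebraic manipulation once these three facts are aligned. The only point worth flagging is that the factorisation formula implicitly requires $p(T,x,y) > 0$, which is a standing assumption in order for the bridge process $(X_s^y)$ to be well defined.
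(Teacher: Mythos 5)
Your proposal is correct and follows essentially the same route as the paper: combine $p_{\tau}(t) = \tfrac{1}{2} f(t,x) p(t,x,g(t))$ from Theorem~\ref{th:relate_probs} with $p_{\tau}^y(t) = \tfrac{1}{2} f(t,x) p^y(0,x,t,g(t))$ from Theorem~\ref{thm:bridge_fpt}, then apply the Markov/Bayes factorisation of the bridge transition density. The paper treats this as an immediate consequence and gives precisely this one-line substitution argument.
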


\begin{rem}
    A result of the form~\eqref{eq:asymp_assump} holds for Bessel processes and the constant boundary $g(t) = c >0$ as well (see e.g.\ (1.1.8) in \cite{Borodin_etal_xx02}, p.~429). Therefore we expect that Theorems~\ref{th:relate_probs},~\ref{th:asymp_holds},~\ref{thm:bridge_fpt} and Corollary~\ref{cor:fpt_rels} will also hold for diffusions with diffusion interval $(0, \infty)$.
\end{rem}

Using the approach of the present paper, we can also derive an interesting result on the sensitivity of the boundary non-crossing probability to changes in the boundary. Denote by
    \[  P(g):= \Pr_x \left(\sup_{0 \leq s \leq T}  \left(X_s - g(s) \right)  < 0 \right)    \]
the probability that our diffusion $(X_s)$ does not cross the boundary during the time interval $[0,T]$. It was shown in \cite{Downes_etal_xx08} that, under broad conditions on $(X_s)$ and $g$, the function $P(g)$ is locally Lipschitz in the uniform norm:
    \[  \big| P( g + h ) - P(g) \big| \leq C(g) \sup_{0 \leq t \leq T} | h(t)|. \]
One can expect that the function $P(g)$ will actually be (\gateaux) differentiable. The next theorem proves this conjecture. For simplicity, we state and prove the theorem assuming $T=1$ (this can easily be extended to the general case).

\begin{thm}
\label{thm:diff_bdy}
        Let $(X_s)$ be a non-explosive diffusion satisfying~\eqref{eq:transformed_proc} with diffusion interval $\mathbb{R}$ and with $\mu \in C^1$ such that \eqref{eq:transformed_proc} has a unique strong solution and such that there exists a function $Q(y)$ satisfying
        \begin{align}
        \label{eq:Q2_def}
            \mu'(y) + \mu^2(y) \geq -\ov{Q}(y), \qquad y \in \mathbb{R},
        \end{align}
and
        \begin{align}
        \label{eq:Q2_bound}
            \limsup_{y \rightarrow -\infty} \frac{\ov{Q}(y)}{y^2} < 1.
        \end{align}
Assume that $g(t)$ and $h(t)$, $0 \leq t \leq 1$, are twice continuously differentiable. Then there exists the limit
        \begin{align}
        \label{eq:deriv_expr}
            \lim_{\ep \rightarrow 0}  \;\ep^{-1} \Big[&P(g + \ep h ) - P(g) \Big] = \sqrt{\frac{2}{\pi}}  \int_0^{1} \frac{h(1-t)}{\sqrt{t}}\Pr_x(1-\tau \in dt)\notag\\
            & \times \Exp \exp \left\{ G\left( -\sqrt{t}W_{1}^+ + g(1) \right) - G(g(1-t)) + \sqrt{t}W_{1}^+  g'(1) + \ov{N}_{t}(t) \right\},
        \end{align}
    where the process $(W^+_s)$ is the Brownian meander,
            \begin{align}
            \label{eq:G_def}
                G(y) := \int_{y_0}^y \mu(z) dz
            \end{align}
  for some $y_0 \in \mathbb{R},$ $g_{0,u}(s) := g(1-u+s)$, $0 \leq s+u \leq 1$, and we set
        \begin{align*}
            \ov{N}_{u}(t) := -\frac{1}{2}\int_0^t \Big[\mu'(-\sqrt{t}W_{s/t}^+ + g_{0,u}(s)) + &\mu^2(-\sqrt{t}W_{s/t}^+ + g_{0,u}(s))\Big] ds\notag\\
            & - \sqrt{t}\int_0^{t} g''_{0,u}(s) W_{s/t}^+ ds - \frac{1}{2} \int_0^t (g'_{0,u}(s))^2 ds.
        \end{align*}
\end{thm}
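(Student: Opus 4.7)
The plan is to combine a first-passage decomposition with a Brownian meander asymptotic, in the spirit of the analysis underlying Theorems~\ref{th:relate_probs}--\ref{th:asymp_holds}. It suffices to treat the case $h \geq 0$, since the general case then follows by splitting $h = h^+ - h^-$ and using linearity. For $h \geq 0$, raising the boundary can only decrease the crossing probability, so
$$P(g+\ep h) - P(g) = \Pr_x(\tau_g \leq 1 < \tau_{g+\ep h}).$$
Applying the strong Markov property at $\tau_g$ (at which $X_{\tau_g} = g(\tau_g)$) and substituting $t := 1-\tau_g$, this rewrites as
$$\int_0^1 \Pr_x(1-\tau \in dt)\, R_\ep(t),$$
where $R_\ep(t)$ denotes the probability that the diffusion started at $g(1-t)$ at time $1-t$ satisfies $X_s < g(s) + \ep h(s)$ for all $s \in [1-t,1]$.

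The central task is to identify $\lim_{\ep\to 0}\ep^{-1}R_\ep(t)$. I would use Girsanov together with It\^o's formula applied to $G$ to rewrite $R_\ep(t)$ as the expectation under Wiener measure of the non-crossing indicator times the exponential weight $\exp\{G(X_1) - G(g(1-t)) - \tfrac12\int_{1-t}^1 [\mu'+\mu^2](X_s)\,ds\}$, and then absorb the non-flatness of $g$ through a further integration by parts that produces the $g'$, $g''$, and $(g')^2$ terms building $\ov{N}_t(t)$, together with the separate $\sqrt{t}\,W^+_1\, g'(1)$ boundary contribution. The rescaled gap $\ep^{-1}(g(1-t+s) + \ep h(1-t+s) - X_{1-t+s})$ on $s \in [0,t]$ starts at $h(1-t)$ and, conditioned on remaining positive on $[0,t]$, converges in law as $\ep \downarrow 0$ to the rescaled Brownian meander $(\sqrt{t}\,W^+_{s/t})$; meanwhile the probability of the conditioning event itself is $\sqrt{2/\pi}\,\ep h(1-t)/\sqrt{t}\,(1+o(1))$, the standard small-gap barrier asymptotic for a Brownian motion started at $\ep h(1-t)$ below a flat boundary at zero.

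Combining these ingredients yields
$$R_\ep(t) = \sqrt{\tfrac{2}{\pi}}\, \tfrac{\ep h(1-t)}{\sqrt{t}}\big(\Exp\exp\{\cdots\} + o(1)\big),$$
with the exponent in the expectation matching that of~\eqref{eq:deriv_expr} once the substitution $X_{1-t+s} = -\sqrt{t}\,W^+_{s/t} + g(1-t+s) + o(1)$ is made inside the Girsanov and integration-by-parts terms. Inserting back into the first-passage integral, dividing by $\ep$, and letting $\ep\downarrow 0$ (dominated convergence being justified by the $C^2$-regularity of $g,h$ together with uniform integrability of the Girsanov weight) produces~\eqref{eq:deriv_expr}.

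The main obstacle is the joint weak-convergence-and-uniform-integrability step inside $R_\ep(t)$: the meander limit for the conditioned rescaled gap must be upgraded to convergence of expectations of the Girsanov functional, whose exponent contains path integrals of $\mu'+\mu^2$ that are a priori unbounded. Assumption~\eqref{eq:Q2_bound} plays exactly the role that~\eqref{eq:Q_bound} plays in Theorem~\ref{th:asymp_holds}: it is the minimal control that delivers uniform integrability of the weight under the meander limit, and I would invoke or adapt the weak convergence result developed in Section~\ref{sec:proof_asymp} to push the limit through.
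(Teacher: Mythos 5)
Your blueprint tracks the paper's four-step scheme closely (last-crossing decomposition, Girsanov plus boundary straightening, extraction of the linear-in-$\ep$ barrier non-crossing probability, meander limit with a uniform-integrability argument under~\eqref{eq:Q2_def}--\eqref{eq:Q2_bound}), but there are two substantive gaps and one misstatement. The misstatement: it is the \emph{un}-rescaled gap $g(1-t+s) + \ep h(1-t+s) - X_{1-t+s}$, started at $\ep h(1-t) \downarrow 0$ and conditioned to stay positive on $[0,t]$, that converges in law to the meander $\sqrt{t}\,W^+_{s/t}$; the $\ep^{-1}$-scaled gap has diffusion coefficient of order $\ep^{-1}$ and does not converge at all (the $\ep$ enters only through the probability of the conditioning event, as you then use correctly in the substitution a sentence later). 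More substantively, the asymptotic $R_\ep(t) \sim \sqrt{2/(\pi t)}\,\ep h(1-t)\,\Exp[\cdots]$ does not hold uniformly as $t \downarrow 0$: for $t = O(\ep^2)$ the linearization $2\Phi(\ep h(1-t)/\sqrt{t}) - 1 \approx \sqrt{2/(\pi t)}\,\ep h(1-t)$ breaks down, and the $1/\sqrt{t}$ singularity compounds the problem, so ``dominated convergence justified by $C^2$-regularity'' is not available as stated. The paper handles this by splitting the $t$-integral at $\ep^{3/2}$: on $[0,\ep^{3/2}]$ the contribution is bounded by $c\,\ep^{3/2} = o(\ep)$ using the boundedness of the density of $1-\tau$ (cf.\ Theorem~3.1 of \cite{Downes_etal_xx08}), while on $[\ep^{3/2},1]$ the barrier-probability linearization holds uniformly since $\ep/\sqrt{t} \leq \ep^{1/4}$.

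Second, ``the general case follows by splitting $h = h^+ - h^-$ and using linearity'' is not an argument: $P$ is nonlinear, and your Steps~1--3 require a \emph{one-sided} perturbation of a fixed base boundary, which fails simultaneously for $h^+$ and for $-h^-$ with the same base $g$. The paper instead writes $P(g + \ep h) - P(g) = \big(P(g + \ep h) - P(g - \ep h^-)\big) - \big(P(g) - P(g - \ep h^-)\big)$, so that each difference compares a boundary against the common lower boundary $g - \ep h^-$, and since that lower boundary itself depends on $\ep$ one then also needs a continuity-in-$\ep$ observation to identify the limit. Both of these points need to be supplied before your sketch becomes a proof.
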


%%%%%%%%%%%%%%%%%%%%%%%%%%%%%%%%%%%%%%%%%%%%%%%%%%%%%
%
%             PROOFS
%
%%%%%%%%%%%%%%%%%%%%%%%%%%%%%%%%%%%%%%%%%%%%%%%%%%%%%

\section{Proof of Theorem~\ref{th:relate_probs}}
\label{sec:relate_proof}

    Conditioning on the position of the process at time $t$, we have
        \begin{align*}
            \Pr_x(\tau \in (t,t+h)) &= \int_{-\infty}^{g(t)} \Pr_x(\tau \in (t,t+h) \,|\, X_t = z) \Pr_x(X_t \in dz)\\
                &= \int_{-\infty}^{g(t)} \Pr_x(\tau \in (t,t+h) \,|\,X_t = z) p(t,x,z) dz\\
                &= \int_{g(t) - h^{1/4}}^{g(t)} + \int_{-\infty}^{g(t) - h^{1/4}} = \int_{g(t) - h^{1/4}}^{g(t)} + o(h),
        \end{align*}
     where the last equality follows from the proof of Theorem~3.1 in \cite{Downes_etal_xx08}. Using \eqref{eq:asymp_assump}, the Markov property of the diffusion and setting $A:= \{ \sup_{t < s < t+h} (X_s - g(s)) \geq 0 \}$, we have
        \begin{align*}
            \Pr_x(\tau &\in (t,t+h) \,|\,X_t = z)\\
                & = \Pr_x\left(\sup_{0 \leq s \leq t} (X_s - g(s)) < 0 \,\Big|\, X_t = z\right) \Pr_x\left(\sup_{t < s < t+h} (X_s - g(s)) \geq 0 \,\Big|\, X_t = z\right)\\
                &=  \Big(f(t,x) (g(t) - z) + o(g(t) - z)\Big) \Pr_x(A\,|\,X_t = z), \qquad z < g(t).
        \end{align*}
     Define the functions
        \[  g^{\pm}_t (s) := g(t) \pm K^{\pm}(s-t), \]
     so by \eqref{eq:lip_assump} we have
        \[  g_t^-(s) \leq g(s) \leq g_t^+(s), \qquad t \leq s < b,  \]
     and hence, for $ 0 < h < b - t$,
        \begin{align*}
            \Pr_x\left(\sup_{t < s < t+h} (X_s - g_t^+(s)) \geq 0 \,\Big|\, X_t = z\right) \leq \Pr_x(&A\,|\,X_t = z)\\
                &\leq \Pr_x\left(\sup_{t < s < t+h} (X_s - g_t^-(s)) \geq 0 \,\Big|\, X_t = z\right).
        \end{align*}
     Define
        \[  \chi^{\pm}(z) := (g(t) - z)h^{-1/2} \pm (\mh + \km) h^{1/2}, \qquad \gm_{\pm}(z) := (g(t) - z)h^{-1/2} \pm (\ms - \kp) h^{1/2}, \]
     where
        \[  \mh:= \sup_{g(t) -h^{1/4} \leq y \leq g(t)} \mu(y), \qquad \qquad \ms:= \inf_{g(t) -h^{1/4} \leq y \leq g(t)} \mu(y).   \]
     Again using derivations from the proof of Theorem~3.1 in \cite{Downes_etal_xx08} gives, as $h \to 0$,
        \begin{align*}
            \Pr_x \bigg(\sup_{t < s < t+h} (X_s - g_t^-(s)) \geq 0 & \,\Big|\, X_t = z\bigg)\\
                &\leq \ov{\Phi}(\chi^-(z)) + e^{2(\mh + \km) (g(t) -z)} \ov{\Phi}(\chi^+(z)) + o(h)
        \end{align*}
     and
        \begin{align*}
            \Pr_x\bigg(\sup_{t < s < t+h} (X_s - g_t^+(s)) \geq 0 & \,\Big|\, X_t = z\bigg)\\
                &\geq \ov{\Phi}(\gm^-(z)) + e^{2(\ms - \kp) (g(t) -z)} \ov{\Phi}(\gm^+(z)) + o(h),
        \end{align*}
    where $\Phi$ denotes the standard normal distribution function, $\ov{\Phi}(u) = 1 - \Phi(u)$. Combining these results we have the bounds
        \begin{align}
            \Pr_x(\tau \in (t,t+h)) &\leq \int_{g(t) - h^{1/4}}^{g(t)} p(t,x,z) \big(f(t,x) (g(t) - z) + o(g(t)-z)\big)\notag\\
                &\qquad \times \left( \ov{\Phi}(\chi^-(z)) + e^{2(\mh + \km) (g(t) -z)} \ov{\Phi}(\chi^+(z)) + o(h) \right) dz,\label{eq:up_bnd}\\
            \Pr_x(\tau \in (t,t+h)) &\geq \int_{g(t) - h^{1/4}}^{g(t)} p(t,x,z) \big(f(t,x) (g(t) - z) + o(g(t)-z)\big)\notag\\
                &\qquad \times \left( \ov{\Phi}(\gm^-(z)) + e^{2(\ms - \kp) (g(t) -z)} \ov{\Phi}(\gm^+(z)) + o(h) \right) dz.\label{eq:low_bnd}
        \end{align}
    Next we will show that
        \begin{align}
        \label{eq:phi_bound}
            \int_{g(t) - h^{1/4}}^{g(t)} p(t,x,z) \big(f(t,x) (g(t) - z) &+ o(g(t)-z)\big) \ov{\Phi}(\chi^-(z)) dz\notag\\
            & \leq \frac{1}{4} h f(t,x) \sup_{g(t) - h^{1/4} \leq w \leq g(t)} p(t,x,w) + o(h),
        \end{align}
    so that the former bound in \eqref{eq:up_bnd} will yield
        \begin{align}
        \label{eq:tau_upper}
            \Pr_x(\tau \in (t,t+h)) \leq \frac{1}{2} h f(t,x) \sup_{g(t) - h^{1/4} \leq w \leq g(t)} p(t,x,w) + o(h).
        \end{align}
    The last claim uses the observation that the second $\ov{\Phi}$ term from the upper bound \eqref{eq:up_bnd} admits the same upper bound as \eqref{eq:phi_bound}, since the exponential factor is $1+ o(1)$, and the $o(h)$ term contributes a factor which itself is $o(h)$. Using the same approach, one can show that the bound \eqref{eq:low_bnd} implies that
        \begin{align}
        \label{eq:tau_lower}
            \Pr_x(\tau \in (t,t+h)) \geq \frac{1}{2} h f(t,x) \sup_{g(t) - h^{1/4} \leq w \leq g(t)} p(t,x,w) + o(h).
        \end{align}

    So we proceed to demonstrate \eqref{eq:phi_bound}. Initially ignoring the $o(g(t) - z)$ term, we have
        \begin{align*}
            J := \int_{g(t) - h^{1/4}}^{g(t)} &p(t,x,z) f(t,x) (g(t) - z) \ov{\Phi}(\chi^-(z)) dz\\
                &\leq f(t,x) \sup_{g(t) - h^{1/4} \leq w \leq g(t)} p(t,x,w) \int_{g(t) - h^{1/4}}^{g(t)} (g(t) - z)\ov{\Phi}(\chi^-(z)) dz\\
                &= D \int_{g(t) - h^{1/4}}^{g(t)} (g(t) - z) \ov{\Phi}(\chi^-(z)) dz,
        \end{align*}
    where we set $D:= f(t,x) \sup_{g(t) - h^{1/4} \leq w \leq g(t)} p(t,x,w)$. Let $y := (g(t) - z)h^{-1/2}$ and $\ov{K} := \mh + \km$. Then
        \begin{align*}
            J &\leq \frac{D}{\sqrt{2 \pi}} h \int_0^{h^{-1/4}} y \left( \int_{y-\ov{K}h^{1/2}}^{\infty} e^{-\frac{1}{2} u^2} du \right) dy\\
            &= \frac{D}{\sqrt{2 \pi}} h \left( \frac{1}{2}h^{-1/2} \int_{h^{-1/4} - \ov{K} h^{1/2}}^{\infty} e^{-\frac{1}{2} u^2} du + \frac{1}{2} \int_{-\ov{K}h^{1/2}}^{h^{-1/4}-\ov{K}h^{1/2}} e^{-\frac{1}{2}u^2} (u + \ov{K} h^{1/2})^2 du \right)\\
            &= \frac{D}{2 \sqrt{2 \pi}} h \left( h^{-1/2} \int_0^{\infty} e^{-\frac{1}{2} (w + h^{-1/4} -\ov{K} h^{1/2})^2} dw + \int_0^{h^{-1/4}}w^2 e^{-\frac{1}{2} (w - \ov{K} h^{1/2})^2} dw \right).
        \end{align*}
    The first integral in the last line is clearly $o(h)$ due to the term $-\frac{1}{2}h^{-1/2}$ in the exponential. This gives
        \begin{align*}
            J &\leq \frac{D}{2 \sqrt{2 \pi}} h \left(1 + o(1) \right) \int_0^{h^{-1/4}} w^2 e^{-\frac{1}{2} w^2} dw + o(h)\\
                &= \frac{D}{2 \sqrt{2 \pi}} h \left( \int_0^{\infty} w^2 e^{-\frac{1}{2} w^2} dw - \int_{h^{-1/4}}^{\infty} w^2 e^{-\frac{1}{2} w^2} dw \right) + o(h).\\
        \end{align*}
    Again, the last integral is clearly $o(h)$, resulting in
        \begin{align}
        \label{eq:J_final}
            J &\leq \frac{1}{4 } h f(t,x) \sup_{g(t) - h^{1/4} \leq w \leq g(t)} p(t,x,w) + o(h).
        \end{align}
    Now consider the contribution of the $o(g(t) - z)$ term to the left-hand side of \eqref{eq:phi_bound}:
        \begin{align*}
            I := \int_{g(t) - h^{1/4}}^{g(t)} o(g(t) - z) \ov{\Phi}(\chi^-(z)) dz.
        \end{align*}
    For any $\ep >0$, we can take $h$ small enough such that
        \begin{align*}
            I &\leq \ep \int_{g(t) - h^{1/4}}^{g(t)} (g(t) - z) \ov{\Phi}(\chi^-(z)) dz = \ep \left(\frac{1}{4} h + o(h)\right),
        \end{align*}
    using the same approach as to derive \eqref{eq:J_final}, so that clearly $I = o(h)$. This proves \eqref{eq:phi_bound} and hence \eqref{eq:tau_upper} and \eqref{eq:tau_lower} as well, as we noted earlier. Now the assertion of Theorem~\ref{th:relate_probs} follows due to the continuity of $f(t,x)$ in $t$ and that of $p(t,x,w)$ in $t$, $w$.

    \begin{rem}
    \label{rem:time_inhom_ext}
    If we consider a diffusion satisfying the stochastic differential equation
            \[  dU_t = \nu(t, U_t) dt + \gs(U_t) dW_t,  \]
    then the transformed process $X_t = F(U_t)$ with $F$ defined in \eqref{eq:F_transform} will again have a unit diffusion coefficient. However, in this case the drift coefficient $\mu = \mu(t,x)$ will depend on time. The proof of an analog of Theorem~\ref{th:relate_probs} in this case follows as for the homogeneous case, with $\hat{\mu}$ and $\ms$ replaced with
            \[  \mh_t:= \sup_{t \leq s \leq t+h,\; g(t) -h^{1/4} \leq y \leq g(t)} \mu(s, y), \qquad \ms_t:= \inf_{t \leq s \leq t+h,\; g(t) -h^{1/4} \leq y \leq g(t)} \mu(s, y),  \]
    respectively.
    \end{rem}

\section{Proof of Theorem~\ref{th:asymp_holds}}
\label{sec:proof_asymp}

To prove Theorem~\ref{th:asymp_holds}, we first need to establish a convergence result which is similar to those derived in \cite{Durrett_etal1_0277} and which may be of independent interest.

In what follows, we consider processes on the time interval $[0,1]$, that are sometimes pinned by their value at time $1$. This can easily be changed to the interval $[0,T]$ and processes pinned at time $T$, $T>0$.

For $a>0$ we denote by $(\wta) = (\wta, 0 \leq t \leq 1)$ the standard Brownian motion ($W_0^a = 0$) conditioned to arrive at $a$ at time $1$. For $\ep > 0$, set
    \begin{align}
    \label{eq:boundary_def}
        l_{\ep}(t) := \ep (t-1), \qquad 0 \leq t \leq 1,
    \end{align}
and for $0 \leq s \leq t \leq 1$ define the event
    \begin{align}
    \label{eq:aep}
        \aep(s,t):= \left\{ \inf_{s \leq u \leq t} (W^a_u - l_{\ep}(u) ) > 0    \right\}.
    \end{align}
Let $\aep := \aep(0,1)$ and define the conditional process $(\wtae)$ as $(\wta)$ conditioned on the event $\aep$. This will be a Markov process with transition density
    \[  \pae(s,y,t,z) := \Pr( \wtae \in dz \,|\, W_s^{a, \ep} = y)/dz   \]
for $0 \leq s < t < 1$.

\begin{thm}
\label{th:weak_conv} As $\ep \rightarrow 0$, the process $(\wtae)$ converges weakly in the space $C[0,1]$ to a process with transition density
    \begin{align}
        p^a(0,0,t,z) &:= \frac{z}{ta} \left( 1 - \exp \left\{ -\frac{2za}{1-t} \right\} \right) \Pr(\wta \in dz)\Big/ dz,\label{eq:zero_dens}\\
        p^a(s,y,t,z) &:= \frac{\left(1 - \exp \left\{ - \frac{2zy}{t-s} \right\}\right) \left(1 - \exp \left\{ - \frac{2za}{1-t} \right\}\right) }{1 - \exp \left\{ - \frac{2ay}{1-s} \right\}} \Pr(\wta \in dz \,|\, \wsa = y)\Big/dz\label{eq:non_zero_dens}
    \end{align}
for $z,y >0$ and $0 < s < t < 1$.
\end{thm}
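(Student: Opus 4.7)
The plan is to prove weak convergence in $C[0,1]$ via the standard two-step procedure: identify the limit of the finite-dimensional distributions of $(\wtae)$, then establish tightness of the family $\{\wae\}_{\ep > 0}$.

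\textbf{Step 1: finite-dimensional convergence.} For $0 < s < t < 1$ and $y, z > 0$, I would start from the factorisation
\[
\pae(s,y,t,z) = q^a(s,y,t,z)\,\frac{\Pr(\aep(s,t) \mid \wsa = y,\, \wta = z)\,\Pr(\aep(t,1) \mid \wta = z)}{\Pr(\aep(s,1) \mid \wsa = y)},
\]
where $q^a$ is the transition density of $(\wta)$. Since $l_\ep$ is affine, conditional on two endpoint values the shifted process $W^a_\cdot - l_\ep(\cdot)$ is again a Brownian bridge, so the reflection principle yields each factor in closed form:
\[
\Pr\bigl(\aep(r_1,r_2) \mid W^a_{r_1} = u,\, W^a_{r_2} = v\bigr) = 1 - \exp\!\left(-\frac{2(u-l_\ep(r_1))(v-l_\ep(r_2))}{r_2-r_1}\right),
\]
and the factor $\Pr(\aep(t,1) \mid \wta = z)$ uses the endpoint values $W^a_1 = a$ and $l_\ep(1) = 0$. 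Substituting $l_\ep(u) = \ep(u-1)$ and letting $\ep \downarrow 0$ reproduces exactly \eqref{eq:non_zero_dens}. The initial density is slightly different: writing
\[
\Pr(\wtae \in dz) = \frac{\Pr(\aep(0,t) \mid \wta = z)\,\Pr(\aep(t,1) \mid \wta = z)}{\Pr(\aep)}\,\Pr(\wta \in dz),
\]
using $\Pr(\aep) = 1 - e^{-2\ep a} \sim 2\ep a$ and $\Pr(\aep(0,t) \mid \wta = z) \sim 2\ep z/t$, one obtains the prefactor $z/(ta)$ in \eqref{eq:zero_dens}. Convergence of multi-time marginals then follows from the Markov property and dominated convergence.

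\textbf{Step 2: tightness.} The delicate point is that $\Pr(\aep) \to 0$, so the naive estimate $\Pr(\wae \in E) \le \Pr(W^a \in E)/\Pr(\aep)$ blows up. My plan is to work directly with the explicit density obtained in Step~1. For each $\gd \in (0, 1/2)$, a direct inspection of $\pae/q^a$ gives, uniformly in small $\ep$, a bound $\pae(s,y,t,z) \le C_\gd\, q^a(s,y,t,z)$ for $s, t \in [\gd, 1-\gd]$, and hence a fourth-moment bound $\Exp(\wtae - \wsae)^4 \le C'_\gd (t-s)^2$; Kolmogorov's criterion then yields tightness on $[\gd, 1-\gd]$. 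The strip near $t = 1$ is immediate since $W^a_1 = a$ is fixed and compatible with the conditioning. For the strip $[0, \gd]$ the explicit one-dimensional marginal $\pae(0,0,t,z)$ from Step~1 shows that $\sup_{t \le \gd} \wtae_t$ is stochastically small uniformly in $\ep$ as $\gd \downarrow 0$, mimicking Brownian-meander behaviour; this controls the modulus of continuity of $(\wtae)$ on $[0, \gd]$.

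Combining these steps with the uniqueness of the limiting finite-dimensional distributions yields weak convergence to the process with transition density \eqref{eq:zero_dens}--\eqref{eq:non_zero_dens}. The main obstacle will be handling the endpoint $t = 0$, where the conditioning event degenerates to probability zero: the idea is to bypass any absolute-continuity argument by exploiting the explicit one-dimensional density computed in Step~1, which stays well-behaved in the limit $\ep \downarrow 0$.
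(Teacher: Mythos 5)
Your Step~1 (convergence of finite-dimensional distributions) follows exactly the same route as the paper's Lemma~\ref{lem:fdds}: the Markov decomposition of the conditioning event, the closed-form linear-boundary crossing probability for the Brownian bridge, and a passage to the limit. That part is fine.

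Your Step~2 is where the substance of the theorem lies, and the argument as written has two gaps. First, the bound $\pae(s,y,t,z)\le C_\gd\, q^a(s,y,t,z)$ is false as stated: taking $z$ bounded away from $0$, the ratio $\pae/q^a$ contains the factor $\bigl(1-\exp\{-2a(y+\ep(1-s))/(1-s)\}\bigr)^{-1}$, which grows like $1/y$ as $y\downarrow 0$, uniformly over small $\ep$. The object one can actually bound is the joint density $\pae(0,0,s,y)\,\pae(s,y,t,z)$, because the problematic denominator of the second factor cancels against the matching term in $\pae(0,0,s,y)$; after that cancellation one gets a bound of the form $C_\gd(1+y)\,q^a(0,0,s,y)\,q^a(s,y,t,z)$, which is enough to obtain a fourth-moment estimate and apply Kolmogorov's criterion on $[\gd,1-\gd]$. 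This is a genuinely different route from the paper's interior argument (which uses a compact-set comparison with the conditioned Brownian meander via Scheffé's theorem), and once corrected it should work, but you must carry the extra factor $(1+y)$ through the moment computation.

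The larger problem is the boundary strips, which you dismiss in two sentences. Neither is as easy as you suggest. Near $t=1$, the conditioning event $\aep$ still constrains the path, and one has to verify uniformly in $\ep$ that $\sup_{1-s\le t\le 1}|\wtae-a|$ is small; this is the content of \eqref{eq:right_end} in the paper's Lemma~\ref{lem:bound_end}, proved by an explicit conditioning on $W^a_{1-s}$ together with dominated convergence — not merely by observing that $W^a_1=a$ is fixed. Near $t=0$ the issue is the crux of the whole theorem: knowing the one-time marginal $\pae(0,0,t,z)$ gives no uniform-in-$\ep$ control over $\sup_{0\le t\le\gd}\wtae_t$, since the latter is a path functional. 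The paper establishes \eqref{eq:left_end} by conditioning on the value at time $s$, invoking the closed-form joint distribution of the maximum and minimum of a Brownian bridge (a two-sided reflection series), dividing by $\Pr(\aep)=1-e^{-2\ep a}$ via L'Hôpital and dominated convergence, and then bounding the series term by term — a nontrivial computation following Durrett, Iglehart and Miller. Some argument of this type, producing explicit bounds on the supremum rather than on a single marginal, is essential and is missing from your sketch. Finally, you should state the patching principle you are implicitly using (the paper isolates it as Lemma~\ref{th:tight_seq}, an extension of Theorem~3.5 of Durrett et al.): tightness of the restrictions to $[s,1-s]$ together with the two endpoint conditions yields tightness on $C[0,1]$.
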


Of course, the transition densities for $(W_s^a)$ that appear on the right-hand sides of \eqref{eq:zero_dens} and \eqref{eq:non_zero_dens} admit well-known closed form expressions (see e.g.\ \cite{Borodin_etal_xx02}, pp.~64--65).

\begin{rem}
    Note that the limiting process from Theorem~\ref{th:weak_conv} is nothing else but the Brownian meander on $[0,1]$ (see e.g.\ \cite{Durrett_etal1_0277}) conditioned to be at $a$ at time $t = 1$. This can be seen from comparing the transition densities for the two processes (for the transition density of the Brownian meander, see e.g.\ (1.1) in \cite{Durrett_etal1_0277}).
\end{rem}

\begin{proof}[Proof of Theorem~\ref{th:weak_conv}]
    The proof follows the standard scheme based on Prokhorov's theorem. First we prove convergence of the finite dimensional distributions. Since the process is Markovian, it is sufficient to prove that transition densities converge. Note that if we take the limit as $a \rightarrow 0$, these transition densities agree with those given in Theorem~5.2 of \cite{Durrett_etal1_0277} where the process is conditioned on $W_1 = 0$. More precisely, the following result holds true.
    \begin{lem}
    \label{lem:fdds} For $0 \leq s < t < 1$ and $y,z >0$,
            \[  \lim_{\ep \rightarrow 0} \pae(0,0,t,z) = p^a(0,0,t,z) \]
        and
            \begin{align*}
                \lim_{\ep \rightarrow 0} \pae(s,y,t,z) = p^a(s,y,t,z).
            \end{align*}
    \end{lem}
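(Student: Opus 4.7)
The plan is to express $\pae(s,y,t,z)$ using Bayes' rule and the Markov property of $(\wta)$ as the product of the unconditioned bridge transition density and an explicit ratio of probabilities that appropriate subbridges of $W^a$ stay above $l_\ep$. The formulae \eqref{eq:zero_dens} and \eqref{eq:non_zero_dens} then follow by passing to the limit $\ep \to 0$ in each factor, with a leading-order expansion required when $(s,y) = (0,0)$.

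Concretely, I first decompose $\aep = \aep(0,s) \cap \aep(s,t) \cap \aep(t,1)$. Since $(\wta)$ is Markov, conditioning on $\wsa = y$ and $\wta = z$ makes the three segments of $W^a$ independent standard Brownian bridges between consecutive pinning points. A short Bayes computation, in which the factor $\Pr(\aep(0,s)\,|\,\wsa=y)$ appears in both numerator and denominator and cancels, yields
\begin{align*}
\pae(s,y,t,z) = \frac{\Pr(\wta \in dz \,|\, \wsa = y)}{dz}\cdot\frac{\Pr(\aep(s,t)\,|\,\wsa=y,\wta=z)\,\Pr(\aep(t,1)\,|\,\wta=z)}{\Pr(\aep(s,1)\,|\,\wsa=y)},
\end{align*}
with the obvious modification (drop the denominator information about the pre-$s$ segment) when $s = 0$.

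Next, I evaluate the three conditional probabilities. For a Brownian bridge from $(u_1,w_1)$ to $(u_2,w_2)$ with $w_i > l_\ep(u_i)$, subtracting the linear function $l_\ep$ reduces the problem to a Brownian bridge staying positive, so the classical reflection identity gives
\begin{align*}
1 - \exp\left\{-\frac{2(w_1 - l_\ep(u_1))(w_2 - l_\ep(u_2))}{u_2 - u_1}\right\}.
\end{align*}
Applying this three times produces an explicit $\ep$-dependent closed form for $\pae(s,y,t,z)$. I would then let $\ep \to 0$. For $s>0$, $y>0$, every factor has a non-degenerate limit obtained by setting $l_\ep \equiv 0$, and the three resulting exponentials assemble into the ratio in \eqref{eq:non_zero_dens}. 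For $s=0$, $y=0$, both $\Pr(\aep(0,t)\,|\,W_0^a = 0, \wta = z)$ and $\Pr(\aep(0,1)\,|\,W_0^a=0)$ vanish linearly in $\ep$, so I expand:
\begin{align*}
\Pr(\aep(0,t)\,|\,W_0^a=0,\wta=z) = \frac{2\ep z}{t}(1+o(1)),\qquad \Pr(\aep(0,1)\,|\,W_0^a=0) = 2\ep a\,(1+o(1)).
\end{align*}
The factors of $\ep$ cancel, and together with the surviving middle factor $1 - e^{-2za/(1-t)}$ from the $[t,1]$ segment this yields \eqref{eq:zero_dens}.

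The main delicate point is the $(s,y) = (0,0)$ case, where both numerator and denominator in the Bayes ratio vanish as $\ep \to 0$ and one must keep first-order terms consistently; that the correct leading order agrees with the prefactor $z/(ta)$ in \eqref{eq:zero_dens} comes down to the cancellation between $2\ep z/t$ and $2\ep a$. Once the Bayes--Markov decomposition and the reflection formula with linear boundary are in place, the remaining work is routine algebra with known Brownian bridge formulae.
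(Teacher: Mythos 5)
Your proposal matches the paper's proof essentially step for step: the same Bayes--Markov decomposition into $\aep(0,s)\cap\aep(s,t)\cap\aep(t,1)$ with cancellation of the $\Pr(\aep(0,s)\,|\,\wsa=y)$ factor, the same use of the linear-boundary reflection identity \eqref{eq:bm_lin_bdy} for each subbridge, and the same first-order expansion of numerator and denominator in the degenerate $(s,y)=(0,0)$ case where both vanish proportionally to $\ep$. The proposal is correct and takes the same route as the paper.
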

    \begin{proof}
        First consider $\pae(0,0,t,z)$. Using the form of the linear boundary crossing probability for a Brownian bridge (see \eqref{eq:bm_lin_bdy}) as well as the Markov property of the process, we have for $z>0$, as $\ep \rightarrow 0$,
            \begin{align*}
                \pae(&0,0,t,z) dz\\
                    &= \Pr(\wtae \in dz) = \Pr(\wta \in dz \,|\, \aep) = \Pr(\wta \in dz, \aep) /\Pr(\aep)\\
                    &= \Pr(\aep \,|\, \wta = z) \Pr(\wta \in dz) / \Pr(\aep)\\
                    &= \Pr(\aep(0, t) \,|\, \wta = z) \Pr (\aep(t,1) \,|\, \wta = z) \Pr(\wta \in dz) /\Pr(\aep)\\
                    &= \frac{ \left( 1 - \exp \left\{ -\frac{2}{t} \ep (z + \ep(1-t)) \right\} \right) \left( 1 - \exp \left\{ -\frac{2}{1-t} (z + \ep(1-t)) a  \right\} \right)} {1 - \exp \left\{ -2 \ep a \right\}} \Pr(\wta \in dz)\\
                    &= \frac{\frac{2}{t} \ep (z + \ep(1-t)) \left( 1 - \exp \left\{ -\frac{2}{1-t} (z + \ep(1-t)) a  \right\} \right)}{2 \ep a } \big( 1 + o(1) \big)\Pr(\wta \in dz)\\
                    &=  \frac{z}{ta} \left( 1 - \exp \left\{ -\frac{2za}{1-t} \right\} \right) (1+ o(1))\Pr(\wta \in dz).
            \end{align*}
        Similarly for $\pae(s,y,t,z)$:
            \begin{align*}
                \pae(&s,y,t,z) dz\\
                    &= \Pr(\wtae \in dz \,|\, \wsae = y) = \Pr( \wta \in dz, \wsa \in dy , \aep) / \Pr( \wsa \in dy, \aep)\\
                    &= \Pr( \aep \,|\, \wta = z, \wsa = y) \Pr( \wta \in dz, \wsa \in dy) / \Pr( \wsa \in dy, \aep)\\
                    &= \frac{\Pr(\aep(s,t)\,|\, \wsa = y, \wta = z) \Pr(\aep(t, 1) \,|\, \wta = z) \Pr(\wta \in dz \,|\, \wsa =y)}{\Pr( \aep(s, 1) \,|\, \wsa = y) }\\
                    &= \frac{\left(1 - \exp \left\{ - \frac{2}{t-s}(y + \ep(1-s))(z + \ep(1-t)) \right\}\right) \left(1 - \exp \left\{ - \frac{2}{1-t}(z+\ep(1-t))a \right\}\right) }{1 - \exp \left\{ - \frac{2}{1-s}a(y + \ep(1-s)) \right\}}\\
                    &\qquad \qquad \times \Pr(\wta \in dz \,|\, \wsa = y)\\
                    &= \frac{\left(1 - \exp \left\{ - \frac{2zy}{t-s} \right\}\right) \left(1 - \exp \left\{ - \frac{2za}{1-t} \right\}\right) }{1 - \exp \left\{ - \frac{2ay}{1-s} \right\}} (1+ o(1)) \Pr(\wta \in dz \,|\, \wsa = y).
            \end{align*}
    \end{proof}
Now we will prove the tightness of the family of distributions of $(\wtae)$. First we state without proof an obvious extension of Theorem~3.5 in \cite{Durrett_etal1_0277} which we will use. As usual, denote by $C[a,b]$ the space of continuous functions on $[a,b]$. Then, for $s \in (0, 1/2)$ and a function $f \in C[0,1]$, denote by $\leftidx{_s}{f}{}$ the restriction of the latter to $[s, 1-s]$: $\leftidx{_s}{f}{} \in C[s,1-s]$, $\leftidx{_s}{f(t)}{} = f(t)$ for $t \in [s, 1-s]$.

\begin{lem}
\label{th:tight_seq}
    Let $(Z_k, k = 1,2,\ldots)$ be a sequence of random elements of $C[0,1]$. Define the random elements $(\leftidx{_s}{Z}{_{k}})$ of $C[s,1-s]$ as the restrictions of $(Z_k)$ to $[s,1-s]$, $0 < s < 1/2$. Then if, for any $s \in (0,1/2)$, $(\leftidx{_s}{Z}{_k}, k = 1,2,\ldots)$ induces a tight family of distributions on $C[s,1-s]$ and, for all $\eta >0$,
        \[  \lim_{s \rightarrow 0 } \lim_{k \rightarrow \infty} \Pr \left(\sup_{0 \leq t \leq s} |Z_k(t)| \leq \eta \right)  = 1    \]
    and
        \[  \lim_{s \rightarrow 0 } \lim_{k \rightarrow \infty} \Pr \left(\sup_{1-s \leq t \leq 1} |Z_k(t) - Z_k(1)| \leq \eta \right)  = 1,    \]
    then the sequence of distributions of $(Z_k, k = 1,2,\ldots)$ in $C[0,1]$ is tight.
\end{lem}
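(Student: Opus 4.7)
The plan is to verify the standard characterization of tightness for a sequence of random elements of $C[0,1]$ (see e.g.\ Theorem~8.2 of Billingsley's \emph{Convergence of Probability Measures}): it suffices to check (i) that the one-dimensional distributions of $Z_k(0)$ form a tight family on $\mathbb{R}$, and (ii) that for every $\rho>0$,
\[
    \lim_{\gd\downarrow 0}\limsup_{k\to\infty}\Pr\!\Big(\sup\nolimits_{|t-t'|\leq \gd,\ t,t'\in[0,1]}|Z_k(t)-Z_k(t')|\geq \rho\Big)=0.
\]
Condition (i) is immediate from the first boundary hypothesis, since $|Z_k(0)|\leq\sup_{0\leq t\leq s}|Z_k(t)|$ for every $s\in(0,1/2)$, so for any prescribed level we can bound $\Pr(|Z_k(0)|>\eta)$ uniformly in large $k$.

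For (ii), the key idea is to cover $[0,1]$ by three overlapping pieces $[0,2s]$, $[s,1-s]$, $[1-2s,1]$ and to bound the modulus of continuity on each. Given a target probability $\ep>0$ and size $\rho>0$, the two boundary hypotheses, applied with parameter $\rho/4$, yield some $s\in(0,1/4)$ and $k_1$ such that, for all $k\geq k_1$, each of the good events $\{\sup_{[0,2s]}|Z_k|\leq\rho/4\}$ and $\{\sup_{[1-2s,1]}|Z_k-Z_k(1)|\leq\rho/4\}$ has probability at least $1-\ep/3$. The assumed tightness of the restriction $\leftidx{_s}{Z}{_k}$ on $C[s,1-s]$, combined with the same Billingsley criterion applied on that compact interval, then furnishes a $\gd\in(0,s)$ such that the internal modulus of continuity of $Z_k$ on $[s,1-s]$ is below $\rho/2$ with probability at least $1-\ep/3$, uniformly in $k$.

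On the intersection of these three good events, any pair $t\leq t'$ in $[0,1]$ with $t'-t\leq\gd\leq s$ is contained in one of the three subintervals: if $t\leq s$ then $t'\leq 2s$, if $t'\geq 1-s$ then $t\geq 1-2s$, and otherwise both points lie in $[s,1-s]$. The bound $|Z_k(t)-Z_k(t')|\leq\rho/2<\rho$ then follows case by case: on the left piece from $|Z_k(t)|+|Z_k(t')|\leq\rho/2$, on the right piece from $|Z_k(t)-Z_k(1)|+|Z_k(t')-Z_k(1)|\leq\rho/2$, and on the middle piece directly from the internal modulus bound. Hence $\Pr(\sup_{|t-t'|\leq\gd}|Z_k(t)-Z_k(t')|\geq\rho)\leq\ep$ for all $k\geq k_1$, and the finitely many remaining $Z_k$ are each individually tight in $C[0,1]$ and can be absorbed into the conclusion by a further shrinkage of $\gd$.

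The argument is essentially bookkeeping with constants and I anticipate no substantive obstacle. The one point requiring care is the geometric observation $\gd\leq s$: this is precisely what forces each $\gd$-close pair $(t,t')$ into a single piece of the three-piece cover and thereby lets the three good events control the full modulus of continuity.
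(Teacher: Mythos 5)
The paper states this lemma without proof, presenting it as ``an obvious extension of Theorem~3.5 in \cite{Durrett_etal1_0277}''; there is therefore no in-paper argument to compare against. Your proposal supplies the missing proof, and it is correct. You invoke the standard Billingsley tightness characterization in $C[0,1]$ (tightness of $Z_k(0)$ plus control of the modulus of continuity $\omega_\gd$) and verify the modulus condition by covering $[0,1]$ with the three overlapping pieces $[0,2s]$, $[s,1-s]$, $[1-2s,1]$: the two endpoint hypotheses control the outer pieces, the assumed tightness of $\leftidx{_s}{Z}{_k}$ in $C[s,1-s]$ (via Arzel\`a--Ascoli, uniformly in $k$) controls the middle one, and the requirement $\gd\leq s$ is exactly what guarantees that any $\gd$-close pair $(t,t')$ lies entirely inside one of the three pieces. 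The one-dimensional tightness of $Z_k(0)$ indeed follows immediately from $|Z_k(0)|\leq\sup_{0\leq t\leq s}|Z_k(t)|$, and the absorption of finitely many initial indices by shrinking $\gd$ is routine (each single $Z_k$ is tight on the Polish space $C[0,1]$). This is almost certainly the same argument the cited Durrett--Iglehart--Miller theorem uses (there only the left-endpoint condition appears, since their processes start at $0$ and are unconstrained at $1$; the symmetric treatment of the right endpoint is the ``obvious extension'' the present paper refers to). No gaps.
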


Next we show that the conditions of Lemma~\ref{th:tight_seq} hold for our processes.
    \begin{lem}
    \label{lem:tight_restict}
        For any fixed $a > 0$ and $s \in (0,1/2)$, the family of measures induced on $C[s, 1-s]$ by $(\leftidx{_s}{\wae}{}, \ep >0)$ is tight.
    \end{lem}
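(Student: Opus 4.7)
I would compare the law on $C[s, 1-s]$ of the restricted conditioned process $\rwae$ with that of the restricted Brownian bridge $\leftidx{_s}{W^a}{}$, showing that the corresponding Radon--Nikodym derivative is uniformly bounded in $L^2$ over $\ep > 0$. Tightness of $\{\rwae : \ep > 0\}$ on $C[s, 1-s]$ then follows from the automatic tightness of the single law of $\leftidx{_s}{W^a}{}$ (a Borel probability on a Polish space) via a standard Cauchy--Schwarz argument.

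\textbf{Step 1 (Radon--Nikodym derivative).} Decompose $\aep = \aep(0, s) \cap \aep(s, 1-s) \cap \aep(1-s, 1)$. By the Markov property of $(W^a_u)$ at times $s$ and $1-s$, given the middle path $W^a|_{[s, 1-s]}$ the outer events $\aep(0, s)$ and $\aep(1-s, 1)$ are conditionally independent and depend on this conditioning only through the endpoint values $W^a_s$ and $W^a_{1-s}$. Denote the laws on $C[s, 1-s]$ of $\rwae$ and $\leftidx{_s}{W^a}{}$ by $\mu_\ep$ and $\nu$, respectively. Combining the Markov decomposition with the explicit Brownian bridge boundary crossing formula \eqref{eq:bm_lin_bdy} applied on $[0, s]$ and $[1-s, 1]$ gives
\begin{align*}
\frac{d\mu_\ep}{d\nu}(w) = \frac{q_\ep(w(s)) \, r_\ep(w(1-s)) \, \ind\{w(u) > \ep(u-1), \, u \in [s, 1-s]\}}{1 - e^{-2\ep a}},
\end{align*}
where $q_\ep(y) = 1 - \exp\{-2\ep(y + \ep(1-s))/s\}$ and $r_\ep(z) = 1 - \exp\{-2a(z + \ep s)/s\}$ on their natural supports.

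\textbf{Steps 2 and 3 (uniform bound and tightness).} For $\ep \in (0, \ep_0]$ with $\ep_0 := 1/(2a)$, using $1 - e^{-x} \leq x$ to linearize $q_\ep$, the trivial $r_\ep \leq 1$, and $1 - e^{-2\ep a} \geq \ep a$, one obtains $d\mu_\ep/d\nu(w) \leq C(1 + |w(s)|)$ for some $C = C(s,a)$. For $\ep > \ep_0$ the trivial bound $d\mu_\ep/d\nu \leq 1/(1 - e^{-2\ep_0 a})$ applies, since the numerator is at most $1$ and the denominator is increasing in $\ep$. Taking the maximum of the two resulting $L^2(\nu)$ bounds and using that $w(s)$ is Gaussian under $\nu$ gives
\begin{align*}
M^2 := \sup_{\ep > 0} \Exp_\nu\!\left[\left(\frac{d\mu_\ep}{d\nu}\right)^{\!2}\right] < \infty.
\end{align*}
Since $\nu$ is tight, for any $\eta > 0$ one can pick a compact $K \subset C[s, 1-s]$ with $\nu(K^c) < \eta^2/M^2$; Cauchy--Schwarz then yields
\begin{align*}
\mu_\ep(K^c) = \Exp_\nu\!\left[\ind_{K^c} \frac{d\mu_\ep}{d\nu}\right] \leq M \sqrt{\nu(K^c)} < \eta
\end{align*}
uniformly in $\ep > 0$, as required.

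\textbf{Main obstacle.} The delicate point is Step~2: one must carefully track the support of the Radon--Nikodym derivative---in particular near $w(s) \approx \ep(s-1)$, a small negative quantity collapsing to $0$ as $\ep \to 0$---to ensure the linearization of $q_\ep$ and the lower bound on $\Pr(\aep)$ are both valid uniformly in $\ep$. Everything else is a routine application of conditional independence and the bridge boundary formula.
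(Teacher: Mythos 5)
Your proof is correct, but it takes a genuinely different route from the paper's. The paper uses the pinned Brownian meander $\leftidx{_s}{\wma}{}$ as its reference object: it fixes a compact $D$ with $\Pr(\leftidx{_s}{\wma}{} \in D) > 1-\eta$, enlarges it to $D' := D + \{\leftidx{_s}{l}{_\gd} : \gd \in [0,1]\}$ so that $D'$ is still compact and contains all the $\ep$-shifts $D_\ep$, and then shows $\Pr(\rwae \in D') \to \Pr(\leftidx{_s}{\wma}{} \in D)$ as $\ep \downarrow 0$ by decomposing over the endpoint pair $(\wae_s,\wae_{1-s})$ and invoking Scheffé's theorem for convergence of the corresponding joint densities to those of $(\wma_s,\wma_{1-s})$. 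This gives tightness only for small $\ep$ (an $\ep_\eta$ appears in the conclusion), which suffices for the application. You instead compare against the \emph{unconditioned} restricted bridge $\nu := \mathrm{Law}(\leftidx{_s}{W^a}{})$, write the Radon--Nikodym derivative $d\mu_\ep/d\nu$ in closed form via the Markov decomposition of $A_\ep$ and the linear-boundary bridge formula \eqref{eq:bm_lin_bdy}, and observe that the awkward $\ep/\ep$ cancellation between the numerator factor $q_\ep(w(s))$ and the denominator $\Pr(A_\ep)$ leaves a bound of the form $C(1+|w(s)|)$, hence a uniform-in-$\ep$ $L^2(\nu)$ bound. Cauchy--Schwarz against the automatic tightness of the single measure $\nu$ then finishes it. Your version buys two things: it avoids introducing the pinned meander and Scheffé altogether, and it gives tightness over all $\ep>0$ rather than just in a neighbourhood of $0$. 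The one thing you should make fully explicit (and you do flag it) is that the linearization $1-e^{-x}\le x$ is applied only where the indicator $\ind\{w(s) > \ep(s-1)\}$ guarantees the exponent is nonnegative, and that the denominator lower bound $1-e^{-2\ep a}\ge \ep a$ is what trades the $\ep$ in $q_\ep$ against the $\ep$ in $\Pr(A_\ep)$; once that is written down the $L^2$ bound is immediate from $W^a_s\sim N(as,\,s(1-s))$ under $\nu$.
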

    \begin{proof}
      Denote by $(W^+)$ the Brownian meander on $[0,1]$ (see e.g.\ \cite{Durrett_etal1_0277} for details). For sets $B, D \subset C[s,1-s]$ and a function $f \in C[s, 1-s]$ we use $B-D$ and $B-f$ to denote the Minkowski differences:
        \[  B - D = \left\{ g - h : g \in B, h \in D \right\}   \]
      and
        \begin{align}
        \label{eq:mink_path}
            B - f = \left\{ g - f : g \in B \right\}.
        \end{align}

      Define the random element $(\rwma)$ of $C[s, 1-s]$ as the Brownian meander $(W^+)$ conditional on $W^+_1 = a$ and restricted to $[s, 1-s]$. Note that, for all $y,z>0$ and $0 < s < 1/2$, the joint distribution of $(\rwma_s, \rwma_{1-s})$ has density $p^a(0,0,s,y) p^a(s,y,1-s,z)$. For a Borel set $B \subset C[s,1-s]$ we also have, due to the Markov property of the process,
        \begin{align}
        \label{eq:meand_inner}
            \Pr ( \rwma \in B \,|\, \rwma_s = y, \rwma_{1-s} = z) = \Pr( \rwm \in B \,|\, \rwm_s = y, \rwm_{1-s} = z).
        \end{align}
Further, by comparing the transition densities for the processes and recalling definition \eqref{eq:boundary_def}, we also have
    \begin{align}
    \label{eq:meand_our}
        \Pr \Big(\rwae \in B + l_{\ep} \,\Big|\, \wae_s = y - \ep(1-s),\,& \wae_{1-s} = z - \ep s \Big)\notag\\
            &= \Pr \Big( \rwm \in B \,\Big|\, W^+_s = y, W^+_{1-s} = z\Big).
    \end{align}

As in \cite{Durrett_etal1_0277}, for a fixed $\eta > 0$ there exists a compact set $D \subset C[s,1-s]$ such that
        \[  \Pr(\rwma \in D ) \geq 1 - \eta.    \]
The set $E:= \big(\leftidx{_s}{l_{\gd}}{}, \gd \in [0,1]\big) \subset C[s,1-s]$ is obviously also compact and so $D' := D + E$ is compact, too. Clearly, for any $\ep \in (0,1)$, $D_{\ep} := D + \leftidx{_s}{l}{_{\ep}} \subset D'$. Then we have, as $\ep \downarrow 0$, setting $I_{\ep} := (-\ep(1-s), \infty) \times (-\ep s, \infty)$, $I:=(0, \infty) \times (0, \infty)$ and using \eqref{eq:meand_our},
    \begin{align*}
        \Pr\big(\rwae \in D'\big) \geq\; &\Pr \big(\rwae \in D_{\ep}\big)\\
            =&\int_{I_{\ep}} \Pr \big(\rwae \in D_{\ep} \,\big|\, \wae_s = y, \wae_{1-s} = z \big) \Pr \big(\wae_s \in dy, \wae_{1-s} \in dz \big)\\
            = &\int_{I_{\ep}} \Pr \big(\rwm \in D_{\ep} - \leftidx{_s}{l}{_{\ep}} \,\big|\, W^+_s = y + \ep(1-s), W^+_{1-s} = z + \ep s \big)\\
                &\qquad \qquad\qquad \qquad\qquad \qquad\qquad\qquad \qquad \times  \Pr \big(\wae_s \in dy, \wae_{1-s} \in dz \big)\\
            = &\int_{I} \Pr \big(\rwm \in D \,\big|\, W^+_s = y', W^+_{1-s} = z' \big)\\
                & \qquad\qquad\qquad\qquad\qquad \times \Pr \big(\wae_s \in dy' - \ep(1-s), \wae_{1-s} \in dz' - \ep s \big)\\           \to &\int_{I} \Pr \big(\rwm \in D \,\big|\, W^+_s = y', W^+_{1-s} = z' \big) \Pr \big(\wma_s \in dy', \wma_{1-s} \in dz' \big)\\
            =&\int_{I} \Pr \big( \rwma \in D \,\big|\, \wma_s = y', \wma_{1-s} = z' \big) \Pr \big(\wma_s \in dy', \wma_{1-s} \in dz' \big)\\
            = &\Pr\big(\rwma \in D\big) > 1 - \eta,
    \end{align*}
where the convergence is justified by Scheffe's theorem (see e.g.\ Theorem~16.12 in \cite{Billingsley_xx95}, p.~215) and the second last equality uses \eqref{eq:meand_inner}. Thus there exists an $\ep_{\eta}$ such that $\Pr(\rwae \in D') > 1 - 2\eta$ for $\ep < \ep_{\eta}$ which completes the proof of the lemma.
    \end{proof}

    \begin{lem}
    \label{lem:bound_end}
        For any $\eta > 0$,
            \begin{align}
            \label{eq:left_end}
                \lim_{s \rightarrow 0 } \lim_{\ep \rightarrow 0} \Pr \left(\sup_{0 \leq t \leq s} |\wtae| \leq \eta \right)  = 1
            \end{align}
    and
            \begin{align}
            \label{eq:right_end}
                \lim_{s \rightarrow 0 } \lim_{\ep \rightarrow 0} \Pr \left(\sup_{1-s \leq t \leq 1} |\wtae - a| \leq \eta \right)  = 1.
            \end{align}
    \end{lem}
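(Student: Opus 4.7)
The plan is to prove both statements by combining the Markov property of the Brownian bridge $W^a$ with the finite-dimensional convergence just established in Lemma~\ref{lem:fdds}, via an explicit Bayes--Markov decomposition.

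For~\eqref{eq:left_end}, the key preliminary observation is that $\wtae \geq l_\ep(t) \geq -\ep$ pointwise on $[0,1]$, so for $\ep < \eta$ the two-sided bound $|\wtae| \leq \eta$ on $[0,s]$ reduces to the upper one $\sup_{0 \leq t \leq s} \wtae \leq \eta$. I would write
\begin{equation*}
\Pr\bigl(\sup\nolimits_{0 \leq t \leq s}\wtae > \eta\bigr) = \frac{\Pr(\sup\nolimits_{0 \leq t \leq s} W^a_t > \eta,\, \aep)}{\Pr(\aep)},
\end{equation*}
decompose $\aep = \aep(0,s) \cap \aep(s,1)$ via the Markov property of $W^a$ at time $s$, and condition on $W^a_s = y$. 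Both numerator and denominator then take the form $\int F(y)\, r(y,s,\ep)\, \Pr(W^a_s \in dy)$, where $r(y,s,\ep) := \Pr(\aep(s,1)\,|\,W^a_s=y)$ is known from the proof of Lemma~\ref{lem:fdds}, and $F(y)$ equals $\Pr(\sup_{0 \leq u \leq s} W^a_u > \eta,\, \aep(0,s)\,|\,W^a_s=y)$ for the numerator and $\Pr(\aep(0,s)\,|\,W^a_s=y)$ for the denominator. I would split the integration at $y = \eta/2$. The contribution to the ratio from $\{y \geq \eta/2\}$ is bounded by $\Pr(\wae_s \geq \eta/2)$, which by Lemma~\ref{lem:fdds} tends to $\int_{\eta/2}^{\infty} p^a(0,0,s,z)\,dz$ as $\ep \downarrow 0$, and this vanishes as $s \downarrow 0$ since $p^a(0,0,s,\cdot)$ concentrates around zero. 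On the inner piece $y \in (-\ep(1-s),\eta/2)$, I would use $\aep(0,s) \subset \{\inf_{0 \leq u \leq s} W^a_u > -\ep\}$ and the classical reflection-series formula for the joint maximum and minimum of the Brownian bridge from $0$ to $y$ on $[0,s]$: its leading image contributes an extra factor $\exp(-2\eta(\eta-y)/s) \leq \exp(-\eta^2/s)$ on top of the corresponding denominator probability, while higher-order images give only a $(1+o(1))$ correction as $s \downarrow 0$ uniformly in small $\ep$.

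For~\eqref{eq:right_end}, I would split
\begin{equation*}
\Pr\bigl(\sup\nolimits_{1-s \leq t \leq 1} |\wtae - a| > \eta\bigr) \leq \Pr\bigl(|\wae_{1-s} - a| > \eta/2\bigr) + \Pr\bigl(\sup\nolimits_{1-s \leq t \leq 1}|\wtae - a| > \eta,\; |\wae_{1-s}-a| \leq \eta/2\bigr).
\end{equation*}
The first summand, by Lemma~\ref{lem:fdds}, tends to $\int_{|y-a|>\eta/2} p^a(0,0,1-s,y)\,dy$ as $\ep \downarrow 0$, and this vanishes as $s \downarrow 0$ because $p^a(0,0,t,\cdot)$ concentrates at $a$ as $t \uparrow 1$. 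For the second summand, I would condition on $\wae_{1-s}=y$ with $|y-a| \leq \eta/2$: by the Markov property of $(\wtae)$ at time $1-s$, the shifted process $(\wae_{1-s+u})_{0 \leq u \leq s}$ is a Brownian bridge from $y$ to $a$ of length $s$ conditioned on $\aep(1-s,1)$. Since $l_\ep$ lies in $[-\ep s, 0]$ on this interval while $y \in [a-\eta/2, a+\eta/2]$ stays bounded away from the barrier (one may take $\eta$ smaller than $a$), one has $\Pr(\aep(1-s,1)\,|\,W^a_{1-s}=y, W^a_1=a) = 1 - \exp(-2a(y+\ep s)/s) \to 1$ as $\ep \downarrow 0$, so the problem reduces to an unconditioned Brownian bridge. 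For such a bridge, the elementary estimate $\Pr(\sup_{0 \leq u \leq s}|B-a|>\eta\,|\,B_0=y) \leq 2\exp(-2(\eta-|y-a|)^2/s) \leq 2\exp(-\eta^2/(2s))$ vanishes as $s \downarrow 0$.

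The main technical obstacle is the uniform-in-$\ep$ control of the inner piece $\{y<\eta/2\}$ in~\eqref{eq:left_end}: one must show that the higher-order images in the reflection series for the joint maximum and minimum of a Brownian bridge contribute only a $(1+o_s(1))$ perturbation to both the numerator and the denominator beyond the leading image, so that the factor $\exp(-2\eta(\eta-y)/s)$ genuinely controls the full ratio. This is routine but tedious and parallels the estimates already performed in the proof of Lemma~\ref{lem:fdds}.
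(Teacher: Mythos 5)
Your overall strategy for both limits is essentially the paper's: condition at the relevant interior time point, invoke the Markov property of $(\wta)$ to factor $\aep$, use the reflection/image formula for the Brownian bridge, and observe that the contribution far from the endpoint vanishes by concentration of the limiting density. The treatment of \eqref{eq:right_end} is sound as sketched (the assertion that $1-\exp\{-2a(y+\ep s)/s\}\to 1$ as $\ep\downarrow0$ should more carefully be ``bounded away from $0$, uniformly in small $\ep$, for $y \geq a - \eta/2 > 0$ and small $s$'', but the conclusion you draw from it is correct).

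However, for \eqref{eq:left_end} the step you declare ``routine but tedious'' is not deferrable: it is the substance of the lemma. Controlling the higher-order images of the two-sided bridge reflection series so that they contribute only a negligible correction, \emph{uniformly as $\ep\downarrow0$ and then as $s\downarrow0$}, requires a concrete argument. In the paper this is done explicitly: after passing to the $\ep\to0$ limit (via L'H\^opital and dominated convergence) the resulting sum over images is split into the $k=0$ term (which tends to $1$ as $s\to0$), the $k\geq1$ terms (nonnegative, hence can be dropped on the correct side of the inequality), and the $k<0$ terms, where the $m=1$ and $m=2$ terms are estimated directly and the tail $m\geq3$ is bounded by an integral comparison plus Mill's inequality. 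None of this ``parallels the estimates already performed in the proof of Lemma~\ref{lem:fdds}'': that lemma is a pure pointwise density computation and contains no series tail estimates at all, so the cross-reference is misleading. As it stands, your proposal identifies the correct tools and the correct decomposition, but the inner-piece estimate for \eqref{eq:left_end} is a genuine unfilled gap, not a routine consequence of anything already established.
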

    \begin{proof} The proof uses an argument similar to the one demonstrating Lemma~5.4 in \cite{Durrett_etal1_0277}.   To establish \eqref{eq:left_end}, first note that
        \[  \Pr \left(\inf_{0 \leq t \leq s} \wtae \geq -\ep \right) = 1,   \]
    and so we just need to consider
        \begin{align*}
            \Pr \left( \sup_{0 \leq t \leq s} \wtae \leq \eta \right) &= \Pr\left( \sup_{0 \leq t \leq s} \wta \leq \eta \,\Big|\,\aep \right) = \Pr\left( \sup_{0 \leq t \leq s} \wta \leq \eta; \aep \right)\Big/\Pr(\aep).
        \end{align*}
    Denote by $q(t,y,z) = \partf{}{z} \Pr_y (W_t \leq z)$ the transition density for the Brownian motion process. By conditioning on the value of the process at time $s$, using the Markov property and the known closed form expressions for both the joint distribution of the maximum and minimum of the Brownian bridge (see e.g.\ (1.15.8) of \cite{Borodin_etal_xx02}) and the distribution of the maximum of the Brownian bridge given in \eqref{eq:bm_lin_bdy}, we obtain
        \begin{align*}
            \Pr \bigg( &\sup_{0 \leq t \leq s} \wta \leq \eta; \aep \bigg)\\
                &= \int_{-\ep(1-s)}^{\eta} \Pr\left( \sup_{0 \leq t \leq s} \wta \leq \eta; \aep(0,s) \,\Big|\, \wsa = z \right) \Pr \big(\aep(s,1) \,\big|\, \wsa = z\big) \Pr(\wsa \in dz)\\
                &\geq \int_{-\ep(1-s)}^{\eta} \Pr\left( \sup_{0 \leq t \leq s} \wta \leq \eta, \inf_{0 \leq t \leq s} \wta >-\ep(1-s) \,\Big|\, \wsa = z \right) \Pr\big(\aep(s,1) \,\big|\, \wsa = z\big)\\
                & \qquad\qquad\qquad\qquad\qquad\qquad\qquad\qquad\qquad\qquad\qquad\qquad\qquad\qquad\qquad \times \Pr(\wsa \in dz)\\
                &= \int_{-\ep(1-s)}^{\eta} \sum_{k = -\infty}^{\infty} \Big[q\Big(s, 0, z+ 2 k [\eta + \ep(1-s)]\Big) - q\Big(s, 0, z + 2k (\eta + \ep(1-s)) + 2 \ep(1-s)\Big) \Big]\\
                    &\qquad \times \left( 1 - \exp\left\{ -\frac{2a}{1-s}(z+\ep(1-s)) \right\} \right)\big(q(s,0,z)\big)^{-1} \Pr(\wsa \in dz).
        \end{align*}
    Now dividing both sides by
        \[  \Pr(\aep) = 1 - \exp \{-2 \ep a\}   \]
    (cf.\ \eqref{eq:bm_lin_bdy}) and using L'H\^opital's rule, the dominated convergence theorem and the fact that $W_s^a \sim N(as, s(1-s))$, we obtain
        \begin{align*}
            \lim_{\ep \rightarrow 0} \Pr \left( \sup_{0 \leq t \leq s} \wtae \leq \eta \right) &\geq \int_0^{\eta} \sum_{k = -\infty}^{\infty} \frac{1-s}{as} (2 \eta k +z) e^{-2 \eta k (\eta k + z)/s} \\
            &\qquad\qquad\qquad \times \left(1 - \exp \left\{ - \frac{2za}{1-s} \right\} \right) q(s(1-s), as, z)dz.
        \end{align*}
    As in the proof of Lemma~5.4 in \cite{Durrett_etal1_0277}, the integral of the term with $k = 0$ approaches $1$ as $s \rightarrow 0$. The contribution of the terms for $k \geq 1$ is clearly positive, so to complete the proof it is sufficient to show that the absolute value of the contribution from the terms with $k < 0$ tends to $0$ as $s \rightarrow 0$. Setting $m = -k$, the absolute value for these terms is bounded above by the expression
        \begin{align}
        \label{eq:pos_k_bound}
            \frac{2 \eta(1-s)}{as} \int_0^{\eta} \left(1 - \exp \left\{ - \frac{2za}{1-s} \right\} \right) q(s(1-s),as, z) \sum_{m =1}^{\infty} m e^{ -2 \eta m (\eta m -z)/s }dz.
        \end{align}
    The contribution of the term with $m=1$ is bounded above by
        \begin{align*}
            &\frac{2 \eta(1-s)}{as} \int_0^{\eta} q(s(1-s),as, z) e^{ -2 \eta (\eta -z)/s }dz\\
                & \quad= \frac{2 \eta(1-s)}{as} e^{-2\eta (\eta - a)} \left(\Phi\left(-\frac{\eta (1-2s) + a s}{\sqrt{s(1-s)}} \right) - \Phi\left(-\frac{2\eta (1-s) + a s}{\sqrt{s(1-s)}} \right) \right) = o(1)
        \end{align*}
    as $s \rightarrow 0$. The same approach shows that the $m=2$ term also gives a contribution which is $o(1)$ as $s \rightarrow 0$. For the remaining terms, observe that
        \[  \sum_{m=3}^{\infty} m e^{ -2 \eta m (\eta m -z)/s } \leq \sum_{m=3}^{\infty} m e^{ -2 \eta^2 (m-1)^2/s },   \qquad 0\leq z \leq \eta.   \]
    We may assume without loss of generality that $s$ is small enough such that the summand in the above expression is a strictly decreasing function of $m$ for $m \geq 3$. We then have that
        \begin{align*}
            \sum_{m=3}^{\infty} m e^{ -2 \eta^2 (m-1)^2/s } &\leq \int_2^{\infty} w e^{-2 \eta^2 (w-1)^2/s}dw\\
            & = \frac{1}{4 \eta^2} \left( s e^{-2 \eta^2/s} + 2\eta \sqrt{2 \pi s} \Phi\left(\frac{-2\eta}{\sqrt{s}}\right) \right) \leq \frac{1}{2 \eta^2} s e^{-2 \eta^2/s},
        \end{align*}
    where the last relation uses Mill's inequality. Thus the absolute value of the sum of all terms for $m \geq 3$ in \eqref{eq:pos_k_bound} is bounded above by
        \[  \frac{(1-s)}{\eta a} e^{-2 \eta^2/s} \int_0^{\eta} \left(1 - \exp \left\{ - \frac{2za}{1-s} \right\} \right) q(s(1-s),as, z) dz \leq \frac{(1-s)}{\eta a} e^{-2 \eta^2/s} = o(1) \]
    as $s \rightarrow 0$. This completes the proof of \eqref{eq:left_end}.

        Now consider \eqref{eq:right_end}. We have
            \begin{align}
            \label{eq:end_convergence}
                \Pr \bigg(\sup_{1-s \leq t \leq 1} &|\wtae - a| \leq \eta \bigg)\notag\\
                    & = \Pr \left( \inf_{1-s \leq t \leq 1} \wtae -a \geq -\eta, \sup_{1-s \leq t \leq 1} \wtae - a \leq \eta \right)\notag\\
                    &= \Pr \left( \inf_{1-s \leq t \leq 1} \wta \geq -\eta + a, \sup_{1-s \leq t \leq 1} \wta \leq \eta + a; \aep \right)/\Pr(\aep) =: J_1/J_2.
            \end{align}
        For $J_1$, conditioning on the value of $W_{1-s}^a$ gives (without loss of generality, we take $a - \eta > 0$ --- if this were not the case, we would replace the lower limit of integration with $0$)
            \begin{align*}
                J_1 &= \int_{a - \eta}^{a + \eta} \Pr \left( \inf_{1-s \leq t \leq 1} W_t \geq -\eta + a, \sup_{1-s \leq t \leq 1} W_t \leq \eta + a; \aep \,\bigg|\, W_{1-s} = y, W_1 = a\right)\\
                &\qquad \qquad\qquad \qquad\qquad \qquad\qquad \qquad\qquad \qquad\qquad \qquad\qquad  \times \Pr(W_{1-s} \in dy \,|\, W_1 = a)\\
                    &= \int_{a - \eta}^{a + \eta} \Pr \left( \inf_{1-s \leq t \leq 1} W_t \geq -\eta + a, \sup_{1-s \leq t \leq 1} W_t \leq \eta + a \,\bigg|\,W_{1-s} = y, W_1 = a\right)\\
                    & \qquad \qquad\qquad \qquad\qquad \qquad\qquad \times  \Pr \left(\aep(0,1-s) \,|\, W_{1-s} = y \right) \Pr(W_{1-s} \in dy \,|\, W_1 = a).
            \end{align*}
        Since $\Pr \left(\aep(0,1-s) \,|\, W_{1-s} = y \right) = 1 - \exp\left\{-2\ep ( y + \ep(1-s))/(1-s) \right\}$ and $J_2 = 1 - \exp \left\{ -2 \ep a \right\}$ (see \eqref{eq:bm_lin_bdy}), using the dominated convergence theorem in \eqref{eq:end_convergence} gives
        \begin{align*}
            \lim_{\ep \rightarrow 0} \Pr \bigg(\sup_{1-s \leq t \leq 1} &|\wtae - a| \leq \eta \bigg) = \int_{a - \eta}^{a + \eta} \Pr(W_{1-s} \in dy \,| \, W_1 = a) \frac{y}{a(1-s)}  \\
            & \qquad \times  \Pr \left( \inf_{1-s \leq t \leq 1} W_t \geq -\eta + a, \sup_{1-s \leq t \leq 1} W_t \leq \eta + a \,|\,W_{1-s} = y, W_1 = a\right).
        \end{align*}
        This clearly tends to $1$ as $s \rightarrow 0$, completing the proof of Lemma~\ref{lem:bound_end}.
    \end{proof}
Combining Lemmas~\ref{th:tight_seq}--\ref{lem:bound_end} completes the proof of Theorem~\ref{th:weak_conv}.

\end{proof}

We now prove Theorem~\ref{th:asymp_holds}.

\begin{proof}[Proof of Theorem~\ref{th:asymp_holds}]

The proof of Theorem~\ref{th:asymp_holds} can be split into three steps. In the first step, we change the measure to express the conditional probability of the event on the left-hand side of \eqref{eq:asymp_assump},
    \begin{align}
    \label{eq:orig_A}
        A:= \left\{\sup_{0 \leq s \leq t} (X_s - g(s)) <0 \right\},
    \end{align}
in terms of the expectation of a (nice) functional of the Brownian bridge process (the Brownian motion starting at $x$ at time $0$ and pinned at $z$ at time $t$), using the approach employed in \cite{Baldi_etal_0802} and later in \cite{Downes_etal_xx08}. Now the underlying ``pre-conditional'' process (the Brownian motion) is space-homogeneous, and therefore we can ``straighten'' the boundary $g(s)$ by switching from the canonical process $(X_s)$ to the process $(X_s - g(s))$ (the second step). In the transformed space, the original set $A$ becomes
    \begin{align}
    \label{eq:new_A}
        \ti{A}:= \left\{\sup_{0 \leq s \leq t} X_s <0 \right\},
    \end{align}
and the conditional process can be thought of as obtained from the diffusion $(\ti{X}_s)$ with
    \begin{align}
    \label{eq:X_tilde}
        d\ti{X}_s = -g'(s) ds + d\ti{W}_s
    \end{align}
($(\ti{W}_s)$ being a Brownian motion), $\ti{X_0} = \ti{x} := x - g(0) <0$, conditioned on $\ti{X}_t = \ti{z} := z - g(t) <0$. We again change measure (now in the transformed space) to express the desired expectation in terms of another one, again for a Brownian bridge process --- now starting at $\ti{x}$ at time $0$ and finishing at $\ti{z}$ at time $t$ --- over the event $\ti{A}$. In the third step, we re-write the latter expectation as the product of the (known) probability of the Brownian bridge to stay below zero (this factor will have the desired behaviour $a \ti{z}(1+ o(1))$ as $\ti{z} \rightarrow 0$) and the conditional expectation, where the conditioning now includes the event $\ti{A}$ as well. It remains to observe that the Brownian bridge is `time-reversible' (if $(Y_s, 0 \leq s \leq t)$ is a Brownian bridge `pinned' at times $s= 0$ and $s=t$, then $(Y_{t-s}, 0 \leq s \leq t)$ is also a Brownian bridge pinned at these same times), and so the behaviour of the last conditional expectation as $\ti{z} \uparrow 0$ can be found using our Theorem~\ref{th:weak_conv}.

Now we will make the above-outlined argument more precise.

\textbf{Step 1:} Let $\Qr_x$ denote the law of the Brownian motion $(W_s)$ in $C[0,t]$ with $W_0 = x$ and $\Qr_x^z$ the law of $(W_s)$ with $W_0=x$ and $W_t = z$. We begin by recalling the following result from \cite{Baldi_etal_0802}.

    \begin{lem}
    \label{lem:bridge_prob}
    Let $\Ph$ denote the law of $(X_s)$ governed by \eqref{eq:transformed_proc} starting at $X_0 = x$ and pinned by $X_t = z$. Then, for any $B \in \gs\left(X_u: u \leq t \right)$,
            \[ \Ph(B) = \frac{q(t, x, z)}{p(t, x, z)} e^{G(z) - G(x)}   \Eqh\left[e^{N(t)}\ind_{ B}\right], \]
        where $q(t,x,z)$ is the transition density for the Brownian motion and $\Eqh$ denotes expectation with respect to the probability $\Qh$, $G(y)$ is defined in \eqref{eq:G_def} and
            \begin{align}
            \label{eq:Nt_def}
                N(t) := -\frac{1}{2}\int_0^t \left(\mu'(X_u) + \mu^2(X_u)\right) du.
            \end{align}
    \end{lem}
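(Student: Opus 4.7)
The plan is to prove the identity as an instance of the Cameron--Martin--Girsanov change of measure, combined with an Itô's formula rewriting of the exponent, followed by disintegration with respect to the terminal position $X_t$. I would carry this out in three steps.

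First, let $\Qr_x$ denote the law on $C[0,t]$ of Brownian motion started at $x$, so that under $\Qr_x$ the canonical process $(W_s)$ is a standard Brownian motion, while under $\Pr_x$ it solves \eqref{eq:transformed_proc}. Girsanov's theorem then gives
\[
\frac{d\Pr_x}{d\Qr_x}\bigg|_{\sigma(X_u : u \le t)} = \exp\Bigl\{\int_0^t \mu(W_s)\,dW_s - \tfrac{1}{2}\int_0^t \mu^2(W_s)\,ds\Bigr\}.
\]
Second, since $G' = \mu$ and $G'' = \mu' \in C^0$, Itô's formula applied to $G(W_s)$ yields
\[
G(W_t) - G(x) = \int_0^t \mu(W_s)\,dW_s + \tfrac{1}{2}\int_0^t \mu'(W_s)\,ds,
\]
so the Girsanov exponent becomes $G(W_t) - G(x) + N(t)$, with $N(t)$ as in \eqref{eq:Nt_def}.

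Third, I disintegrate over the endpoint. For any $B \in \sigma(X_u : u \le t)$ and any bounded Borel test function $\varphi$, the first two steps give
\[
\Exp_x\bigl[\ind_B\,\varphi(X_t)\bigr] = \Exp^{\Qr_x}\Bigl[\ind_B\,\varphi(W_t)\,e^{G(W_t) - G(x) + N(t)}\Bigr].
\]
Writing the left side as $\int \Ph(B)\,\varphi(z)\,p(t,x,z)\,dz$ and, using $G(W_t) = G(z)$ on $\{W_t = z\}$ together with the disintegration $\Qr_x = \int \Qh\,q(t,x,z)\,dz$, writing the right side as $\int e^{G(z) - G(x)}\,\Eqh[\ind_B e^{N(t)}]\,\varphi(z)\,q(t,x,z)\,dz$, the identity follows for a.e.\ $z$ by the arbitrariness of $\varphi$, and then for every $z$ by continuity of both sides in $z$ (ensured by the $C^1$ regularity of $\mu$ and standard smoothness of $p$).

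The main obstacle is justifying Step~1, since the Girsanov density requires an integrability condition (for instance Novikov's) on $\exp\{\tfrac{1}{2}\int_0^t \mu^2(W_s)\,ds\}$, and for unbounded $\mu \in C^1$ this need not hold globally. I would circumvent this by a standard localization: introduce the exit times $\tau_n := \inf\{s : |W_s| \ge n\}$, apply Girsanov and Steps~2--3 on the horizon $[0, t \wedge \tau_n]$ where $\mu$ and $\mu'$ are bounded, and then let $n \to \infty$, using the non-explosion hypothesis on $(X_s)$ to conclude that $\tau_n \to \infty$ almost surely under both $\Pr_x$ and $\Qr_x$ and that $N(t \wedge \tau_n) \to N(t)$, while dominated convergence handles passage to the limit inside the expectations. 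The continuity of $\mu'$ and $\mu^2$ ensures that the integrands in $N$ are locally bounded on each $\{|W_s| \le n\}$, which is what makes the local Novikov verification routine.
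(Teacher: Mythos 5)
Your proposal is correct and follows essentially the route the paper relies on: the paper does not prove this lemma itself but recalls it from \cite{Baldi_etal_0802}, and justifies the change of measure exactly along your lines --- Girsanov's theorem made legitimate not by a global Novikov condition but by non-explosion, local boundedness of $\mu$, and uniqueness of the strong solution (via Theorems 7.18--7.19 of \cite{Liptser_etal_xx01}), with the exponent rewritten through It\^o's formula applied to $G$ and the identity then disintegrated over the endpoint $X_t$. The only soft spots are minor: the passage to the limit in your localization is more safely handled by Fatou's lemma together with the fact that both sides define (sub)probability masses converging to $1$ (or by citing the Liptser--Shiryaev theorem directly) rather than by dominated convergence without a dominating function, and the final ``for every $z$ by continuity'' step really amounts to fixing the canonical version of the pinned law, since for an arbitrary $B\in\gs(X_u:u\le t)$ neither side need be continuous in $z$.
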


 Note that the notation of the above lemma tacitly assumes that $(X_s)$ is a canonical process on the sample space $C[0,t]$, and we continue with this assumption throughout the proof of Theorem~\ref{th:asymp_holds}.

 Hence for the event $A$ defined in \eqref{eq:orig_A} we have
    \begin{align}
    \label{eq:old_rn_deriv}
        \Pr_x\left(\sup_{0 \leq s \leq t} (X_s - g(s)) < 0 \,\Big|\, X_t = z\right) = \Ph(A) = \frac{q(t, x, z)}{p(t, x, z)} e^{G(z) - G(x)}   \Eqh\left[e^{N(t)}\ind_{ A}\right].
    \end{align}
Note that the changes of measure used here and in the sequel using Girsanov's theorem are justified since under these measures $(X_s)$ is non-explosive and $\mu(y)$ (and $g(s)$) is locally bounded, implying the conditions of Theorem~7.19 of \cite{Liptser_etal_xx01}, as well as Assumptions (I) an (II) of Theorem 7.18 (with our assumption of the existence of a unique strong solution to \eqref{eq:transformed_proc}), hold.

\textbf{Step 2:} We now transform the space, defining $\psi:C[0,t] \to C[0,t]$ by $\psi(f) = f - g$. This mapping induces a new measure $\ti{\Qr}_{\ti{x}}$ on the space: for a Borel set $B \subset C[0,t]$,
    \[  \ti{\Qr}_{\ti{x}}(B) = \Qr_x(B + g),    \]
again using the Minkowski difference notation defined in \eqref{eq:mink_path} and $\ti{x} = x - g(0)$. In the same way $\psi$ also induces the measure $\ti{\Qr}_{\ti{x}}^{\ti{z}}$ (with expectation $\ti{\Exp}_{\ti{x}}^{\ti{z}}$) from $\Qr_x^z$. This gives
    \[  \Eqh\left[ e^{N(t)}\ind_{A} \right] = \ti{\Exp}_{\ti{x}}^{\ti{z}}\left[ e^{\ti{N}(t)}\ind_{\ti{A}} \right], \]
where
    \[  \ti{N}(t) := -\frac{1}{2}\int_0^t \left[\mu'(X_s + g(s)) + \mu^2(X_s + g(s))\right] ds  \]
and $\ti{A}$ is given in $\eqref{eq:new_A}$. Note that under the measure $\ti{\Qr}_{\ti{x}}^{\ti{z}}$ the canonical process $(X_s)$ is no longer a Brownian motion. However, as stated previously, we can think of the conditional process as being obtained from the diffusion $(\ti{X}_s)$ defined in \eqref{eq:X_tilde}. We change measure again such that under this new measure, $\ov{\Qr}_{\ti{x}}$, the process $(X_s)$ is again a Brownian motion starting at $X_0 = \ti{x}$. By Girsanov's theorem,
    \begin{align*}
        \tilde{\gz}_t := \frac{d\ti{\Qr}_{\ti{x}}}{d\ov{\Qr}_{\ti{x}}} = \exp \left\{- \int_0^t g'(s) dX_s - \frac{1}{2}\int_0^t (g'(s))^2 ds \right\}.
    \end{align*}
Using \ito's formula we have
    \[  \int_0^t g'(s) dX_s = X_t g'(t) - \int_0^t g''(s) X_s ds.   \]
This gives
    \[  \tilde{\gz}_t = \exp \left\{ -X_t g'(t) + \int_0^t g''(s) X_s ds - \frac{1}{2}\int_0^t (g'(s))^2 ds \right\}.   \]
Let $\ov{\Qr}_{\ti{x}}^{\ti{z}}$ (with corresponding expectation $\ov{\Exp}_{\ti{x}}^{\ti{z}}$) denote the law $\ov{\Qr}_{\ti{x}}$ conditioned on $X_t = \ti{z}$; this is clearly the distribution of the Brownian bridge on $[0,t]$ pinned at $\ti{x}$ at time $s=0$ and at $\ti{z}$ at time $s=t$. Applying the same reasoning that leads to the assertion of Lemma~\ref{lem:bridge_prob} (see \cite{Baldi_etal_0802}) then gives
    \begin{align}
    \label{eq:second_meas_chng}
        \Eqh\left[ e^{\ti{N}(t)}\ind_{A} \right] &= \frac{\tilde{p}(0,\ti{x},t,\ti{z})}{q(t,\ti{x},\ti{z})} \ov{\Exp}_{\ti{x}}^{\ti{z}} \left[ \ti{\gz}_t e^{N(t)}  \ind_{\tilde{A}} \right]= \frac{\tilde{p}(0,\ti{x},t,\ti{z})}{q(t,\ti{x},\ti{z})}e ^{-\tilde{z} g'(t)} \ov{\Exp}_{\ti{x}}^{\ti{z}} \left[ e^{\ov{N}(t)}  \ind_{\tilde{A}} \right],
    \end{align}
where $\tilde{p}(0,\ti{x},t,\ti{z})$ is the transition density of the (time-inhomogeneous) process $(X_s)$ under $\ov{\Qr}_{\ti{x}}$, and we set
    \begin{align*}
        \ov{N}(t) &:= -\frac{1}{2}\int_0^t \Big[\mu'\left(X_s + g(s)\right) + \mu^2\left(X_s + g(s)\right)\Big] ds\notag\\
        &  \qquad \qquad \qquad \qquad \qquad \qquad  + \int_0^t g''(s) X_s ds - \frac{1}{2}\int_0^t (g'(s))^2 ds.
    \end{align*}
Combining \eqref{eq:old_rn_deriv} and \eqref{eq:second_meas_chng} then gives
    \[  \Pr_x\left(\sup_{0 \leq s \leq t} (X_s - g(s)) < 0 \,\Big|\, X_t = z\right) = \frac{q(t, x, z)}{p(t, x, z)} \frac{\tilde{p}(0,\ti{x},t,\ti{z})}{q(t,\ti{x},\ti{z})} e^{G(z) - G(x)-\tilde{z} g'(t)}  \ov{\Exp}_{\ti{x}}^{\ti{z}} \left[ e^{\ov{N}(t)}  \ind_{\tilde{A}} \right].    \]

\textbf{Step 3:} Now condition on the event $\tilde{A}$. From \eqref{eq:bm_lin_bdy}, we have $\ov{\Qr}_{\ti{x}}^{\ti{z}}(\ti{A}) = 1 - \exp\{-2 \ti{x} \ti{z}/t \}$, which gives
    \begin{align}
    \label{eq:final_prob}
        \Pr_x\bigg(\sup_{0 \leq s \leq t} &(X_s - g(s)) < 0 \,\Big|\, X_t = z\bigg)\notag\\
            & = \frac{q(t, x, z)}{p(t, x, z)}\frac{\tilde{p}(0,\tilde{x},t,\tilde{z})}{q(t,\ti{x},\ti{z})} e^{G(z) - G(x)-\tilde{z} g'(t)}  \left(1 - \exp\left\{-\frac{2}{t} \tilde{x} \tilde{z} \right\} \right) \ov{\Exp}_{\ti{x}}^{\ti{z}} \left[ e^{\ov{N}(t)} \,\Big|\, \tilde{A}\right]\notag\\
            & = \frac{q(t, x, z)}{p(t, x, z)}\frac{\tilde{p}(0,\tilde{x},t,\tilde{z})}{q(t,\ti{x},\ti{z})} e^{G(z) - G(x)-\tilde{z} g'(t)} \frac{2}{t} \ti{x} \ti{z} \ov{\Exp}_{\ti{x}}^{\ti{z}} \left[ e^{\ov{N}(t)} \,\Big|\, \tilde{A}\right] \big( 1 + o(1) \big)
    \end{align}
as $z \uparrow g(t)$ (i.e.\ $\ti{z} \uparrow 0$). Due to the above mentioned time-reversal and symmetry properties of the Brownian bridge, we obtain that
    \begin{align}
    \label{eq:rev_exp}
        \ov{\Exp}_{\ti{x}}^{\ti{z}} \left[ e^{\ov{N}(t)} \,\Big|\, \tilde{A}\right] = \ov{\Exp}_{\ti{z}}^{\ti{x}} \left[ e^{\un{N}(t)} \,\Big|\, \tilde{A}\right],
    \end{align}
where
    \begin{align}
    \label{eq:under_N}
        \un{N}(t) = -\frac{1}{2} \int_0^t \big[\mu'(X_s + g(t-s))+& \mu^2(X_s + g(t-s)) \big] ds\notag\\
        & + \int_0^t g''(t-s) X_s ds - \frac{1}{2} \int_0^t (g'(t-s))^2 ds.
    \end{align}
Thus we are now in the situation of Theorem~\ref{th:weak_conv} (with an appropriate
change of scale), conditioning on our event $\ti{A}$ being equivalent to conditioning
on the event $\aep(0,1)$ (see \eqref{eq:aep}) in the theorem (note that the initial
value is now $\tilde{z} \uparrow 0$). To complete the proof, we need to show that the
expectation \eqref{eq:rev_exp} converges to a finite limit as $\tilde{z} \uparrow 0$.
For a fixed large $H>0$, let $D:= \{ \inf_{0 \leq s \leq t} X_s \leq -H\}$ and, as
usual, $D^c$ denote the complement event. We have
    \begin{align}
    \label{eq:split_exp}
        \ov{\Exp}_{\ti{z}}^{\ti{x}} \left[ e^{\un{N}(t)} \,\Big|\, \tilde{A}\right] = \ov{\Exp}_{\ti{z}}^{\ti{x}} \left[ e^{\un{N}(t)} \ind_D\,\Big|\, \tilde{A}\right] + \ov{\Exp}_{\ti{z}}^{\ti{x}} \left[ e^{\un{N}(t)} \ind_{D^c} \,\Big|\, \tilde{A}\right].
    \end{align}
In the integrand in the second term on the right-hand side of \eqref{eq:split_exp},
$\un{N}(t)$ is clearly a bounded continuous function of $X_{\mbox{\boldmath $\cdot$}}$
(in the uniform topology) on $\tilde{A}.$ Since $D$ has zero boundary under the
limiting distribution, we see, using the weak convergence result of
Theorem~\ref{th:weak_conv}, that this term converges to a finite limit as $\tilde{z}
\uparrow 0$. We complete the proof of Theorem~\ref{th:asymp_holds} by showing that, as
$H \rightarrow \infty$, the first term on the right-hand side of \eqref{eq:split_exp}
converges to $0$ (which is basically equivalent to uniform integrability of
$e^{\un{N}(t)}$ under the distributions $\ov{\Qr}_{\ti{z}}^{\ti{x}}\big(\mbox{\boldmath
$\cdot$} \,\big|\, \tilde{A}\big)$, $\tilde{z} \in (-1, 0)$).

We begin by considering the distribution of the minimum of $(X_s)$ for $s \in [0,t]$, under the measure $\ov{\Qr}_{\ti{z}}^{\ti{x}}$ and conditional on $\tilde{A}$. Define $\gm:= \inf\{s>0 : X_s = \tilde{x} \}$. Under $\ov{\Qr}_{\ti{z}}^{\ti{x}}$, we clearly have that $\gm \leq t$ a.s. Then for $y<\tilde{x}$, observe that
    \begin{align}
    \label{eq:bound_sup}
        \ov{\Qr}_{\ti{z}}^{\ti{x}} \left( \inf_{0 \leq s \leq t} X_s \leq y \, \bigg| \, \tilde{A} \right) = \int_0^t \ov{\Qr}_{\ti{z}}^{\ti{x}} (\gm \in du \, | \, \tilde{A} ) \ov{\Qr}_{\ti{z}}^{\ti{x}} \left( \inf_{0 \leq s \leq t} X_s \leq y \, \bigg| \, \tilde{A}; \gm = u \right).
    \end{align}
Using the strong Markov property of $(X_s)$ and \eqref{eq:bm_lin_bdy} (recalling that, under $\ov{\Qr}_{\ti{z}}^{\ti{x}}$, the process $(X_s)$ is a pinned Brownian motion), we have that
    \begin{align*}
        \ov{\Qr}_{\ti{z}}^{\ti{x}} \bigg( \inf_{0 \leq s \leq t} &X_s \leq y \, \bigg| \, \tilde{A}; \gm = u \bigg) = \ov{\Qr}_{\ti{z}}^{\ti{x}} \bigg( \inf_{u \leq s \leq t} X_s \leq y \, \bigg| \, \tilde{A}; \gm = u \bigg)\\
            & = \ov{\Qr}_{\ti{z}}^{\ti{x}} \left( \inf_{u \leq s \leq t} X_s \leq y , \sup_{u \leq s \leq t} X_s < 0 \, \bigg| \, X_u = \tilde{x} \right) \bigg/ \ov{\Qr}_{\ti{z}}^{\ti{x}} \left( \sup_{u \leq s \leq t} X_s < 0 \, \bigg| X_u = \tilde{x} \right)\\
            & \leq \ov{\Qr}_{\ti{z}}^{\ti{x}} \left( \inf_{u \leq s \leq t} X_s \leq y \, \bigg| \, X_u = \tilde{x} \right) \bigg/ \ov{\Qr}_{\ti{z}}^{\ti{x}} \left( \sup_{u \leq s \leq t} X_s < 0 \, \bigg| X_u = \tilde{x} \right)\\
            & = \frac{\exp \left\{ - \frac{2}{t-u}(y-\tilde{x})^2\right\}}{1 - \exp \left\{ -\frac{2}{t-u} \tilde{x}^2 \right\} } \leq \frac{e^{-2 (y-\tilde{x})^2/t}}{1- e^{-2 \tilde{x}^2/t}}.
    \end{align*}
Hence an upper bound for \eqref{eq:bound_sup} is given by
    \begin{align}
    \label{eq:inf_bound}
        \ov{\Qr}_{\ti{z}}^{\ti{x}} \left( \inf_{0 \leq s \leq t} X_s \leq y \, \bigg| \, \tilde{A} \right) \leq \frac{e^{-2 (y-\tilde{x})^2/t}}{1- e^{-2 \tilde{x}^2/t}} \int_0^t \ov{\Qr}_{\ti{z}}^{\ti{x}} (\gm \in du \, | \, \tilde{A} ) = \frac{e^{-2 (y-\tilde{x})^2/t}}{1- e^{-2 \tilde{x}^2/t}}.
    \end{align}
Now consider $\un{N}(t)$ in the first term on the right-hand side of \eqref{eq:split_exp}. Set $\ov{g}(t) := \max_{0 \leq s \leq t}g(s)$ and $\un{g}(t) := \min_{0 \leq s \leq t}g(s)$. Using \eqref{eq:under_N}, assumption \eqref{eq:Q_def} and the assumption that $g(s)$ is twice continuously differentiable, we see that, on the event $\tilde{A}$,
    \begin{align*}
        \un{N}(t) &\leq -c \inf_{0 \leq s \leq t} X_s - \frac{t}{2} \inf_{\inf_{0 \leq s \leq t} X_s + \un{g}(t) < y \leq \ov{g}(t)} \big[\mu'(y)+ \mu^2(y) \big]\\
        & \leq -c \inf_{0 \leq s \leq t} X_s + \frac{t}{2} Q\left(\inf_{0 \leq s \leq t} X_s + \un{g}(t)\right)
    \end{align*}
for some $c>0$, where we have assumed, without loss of generality, that $Q(y)$ is a
decreasing function for $y < 0$. Thus using \eqref{eq:Q_bound}, for large enough $H$
and some $r < 2/t$, we have
    \begin{align*}
        \un{N}(t) &\leq r \left( \inf_{0 \leq s \leq t} X_s \right)^2 \qquad \mbox{ \ on $D$}.
    \end{align*}
Hence, using \eqref{eq:inf_bound},
    \begin{align}
    \label{eq:final_unif_int}
        \ov{\Exp}_{\ti{z}}^{\ti{x}} \left[ e^{\un{N}(t)} \ind_{D }\,\Big|\, \tilde{A}\right] & \leq \int_{-\infty}^{-H} e^{r y^2} d \ov{\Qr}_{\ti{z}}^{\ti{x}} \left( \inf_{0 \leq s \leq t} X_s \leq y \right)\notag\\
        &=\left[ e^{r y^2} \ov{\Qr}_{\ti{z}}^{\ti{x}}\left( \inf_{0 \leq s \leq t} X_s  \leq y \right) \right]_{-\infty}^{-H} - \int_{-\infty}^{-H} 2r y e^{r y^2} \ov{\Qr}_{\ti{z}}^{\ti{x}}\left( \inf_{0 \leq s \leq t} X_s \leq y \right) dy\notag\\
        & \leq \frac{1}{1 - e^{- 2 \tilde{x}^2/t}} \left( e^{r H^2 - 2(H+ \tilde{x})^2/t} - 2 r \int_{-\infty}^{-H} y e^{r y^2 - 2 (y- \tilde{x})^2/t} dy \right),
    \end{align}
which clearly vanishes as $H \rightarrow -\infty$. Thus the expectation in \eqref{eq:final_prob} converges to a finite limit and hence we have
    \[  \Pr_x\left(\sup_{0 \leq s \leq t} (X_s - g(s)) < 0 \,\Big|\, X_t = z\right) = (f(t,x) +o(1))\tilde{z} = (f(t,x) +o(1))(g(t) - z),   \]
completing the proof of Theorem~\ref{th:asymp_holds}.

\end{proof}

\section{Proof of Theorem~\ref{thm:diff_bdy}}
\label{sec:proof_diff}

The proof of this theorem can be divided into four steps. For the first three steps, we assume $h(t) \geq 0$, $0 \leq t \leq 1$. In the first step, we observe that the difference $P(g + \ep h) - P(g)$ can be written as an integral by conditioning on the first crossing time $\tau$ of $g$. In the second step we follow a similar scheme to the proof of Theorem~\ref{th:asymp_holds}, transforming the integrand so it is written as the product of an expectation of a functional of the Brownian meander and a well-known boundary non-crossing probability for the Brownian motion. In the third step, we calculate the limit of the ratio of the thus obtained expression to $\ep$ as $\ep \rightarrow 0$. This involves careful treatment near the right end point of the integration interval. Finally, in the fourth step, we show how to extend the results to general $h$ which are twice continuously differentiable.

\textbf{Step 1:} Clearly, the difference $P(g + \ep h) - P(g)$ is the probability of $(X_s)$ crossing $g$ at some time prior to time $1$ without ever crossing $g + \ep h$ on $[0,1]$ (as we said, we assume here that $h \geq 0$). We condition on the time remaining until time $1$ after the first crossing time of $g$ and use the strong Markov property to obtain that, for $\ep \in (0,1)$,
    \begin{align}
    \label{eq:split_tau}
        P(g + \ep h) - P(g) &= \int_0^1 \Pr_x(1 - \tau \in dt) \Pr_x \left( \sup_{1-t \leq s \leq 1} (X_s - g(s) - \ep h(s)) < 0 \,\Big|\, X_{1-t} = g(1-t) \right)\notag\\
        &= \int_0^1 \Pr_x(1-\tau \in dt) \Pr_{g(1-t)} \left( \sup_{0 \leq v \leq t} (X_v - g(1-t+v) - \ep h(1-t+v)) < 0 \right)\notag\\
        &= \int_0^{\ep^{3/2}} + \int_{\ep^{3/2}}^1 =: J_1 + J_2.
    \end{align}

\textbf{Step 2:} We will show that $J_1 = o(\ep)$ in Step 3. For $J_2$, we continue in a similar manner to the proof of Theorem~\ref{th:asymp_holds}. Note, however, that in this case we do not condition on the end point of the process. Define
    \[  g_{\ep,t}(v) := g(1-t+v) + \ep h(1-t+v), \qquad 0 \leq v \leq t,    \]
and set
    \[  A(t):= \left\{ \sup_{0 \leq v \leq t} (X_v - g_{\ep,t }(v)) < 0) \right \}. \]
Using Girsanov's theorem to change to the Brownian motion measure, we have
    \[  \Pr_{g(1-t)} \left( A(t) \right) = \hat{\Exp}_{g(1-t)} \left[ e^{ G(X_{t}) - G(g(1-t)) + N(t)} \ind_{A(t)} \right], \]
where $\hat{\Exp}_{g(1-t)}$ denotes expectation with respect to the measure $\Qr_{g(1-t)}$ defined in the proof of Theorem~\ref{th:asymp_holds} and $N(t)$ is given by \eqref{eq:Nt_def} (see the proof of Theorem~\ref{th:asymp_holds} for the justification of the use of Girsanov's theorem and \ito's lemma). Again we transform the space using the function $\psi_{\ep,t}:C[0,t] \to C[0,t]$ defined by $\psi_{\ep,t}(f) = f - g_{\ep,t}$. This induces the measure $\ti{\Qr}_{\gd(t)}$ with expectation $\ti{\Exp}_{\gd(t)}$, where $\gd(t) := -\ep h(1-t)$, such that
    \begin{align*}
        \hat{\Exp}_{g(1-t)} &\left[ e^{ G(X_{t}) - G(g(1-t)) + N(t) } \ind_{A(t)} \right] = \ti{\Exp}_{\gd(t)} \left[ e^{ G(X_{t} + g_{\ep,t}(t)) - G(g(1-t)) + \ti{N}_{\ep,t}(0,t)} \ind_{\ti{A}(t)} \right],
    \end{align*}
where
    \[  \ti{N}_{\ep,u}(s,t) := -\frac{1}{2}\int_s^t \left[\mu'(X_v + g_{\ep,u}(v)) + \mu^2(X_v + g_{\ep,u}(v))\right] dv    \]
and
    \begin{align*}
            \ti{A}(t) := \left\{\sup_{0 \leq s \leq t} X_s <0 \right\}.
    \end{align*}
Again we can think of $(X_v)$ under this new measure as driven by the stochastic differential equation
    \[  dX_v = -g'_{\ep,t}(v) dv + d \ti{W}_v,  \]
for a Brownian motion $(\ti{W_v})$. Changing to the measure $\ov{\Qr}_{\gd(t)}$ (with corresponding expectation $\ov{\Exp}_{\gd(t)}$) such that $(X_v)$ is again the Brownian motion gives
    \begin{align*}
        \Pr_{g(1-t)} \left( A(t) \right) = \ov{\Exp}_{\gd(t)} \left[ e^{ G\left( X_{t}+ g_{\ep,t}(t) \right) - G(g(1-t)) - X_{t} g_{\ep,t}'(t) + \ov{N}_{\ep,t}(0,t) } \ind_{\tilde{A}(t)} \right],
    \end{align*}
where
    \[  \ov{N}_{\ep,u}(s,t) := \ti{N}_{\ep,u}(s,t) + \int_s^{t} g''_{\ep,u}(v) X_v dv - \frac{1}{2} \int_s^t (g'_{\ep,u}(v))^2 dv.  \]
We then condition on $\tilde{A}(t)$, which yields
    \begin{align}
    \label{eq:final_form}
        \Pr_{g(1-t)} \left( A(t) \right) = \ov{\Exp}_{\gd(t)} \left[ e^{G\left( X_{t}+ g_{\ep,t}(t) \right) - G(g(1-t)) - X_{t} g_{\ep,t}'(t) + \ov{N}_{\ep,t}(0,t) } \,\Big|\, \tilde{A}(t) \right] \ov{\mathbb{Q}}_{\gd(t)} \left( \tilde{A}(t) \right).
    \end{align}
The last factor is now the probability for the Brownian motion to stay below a fixed level on $[0,t]$. Hence, uniformly in $t \in (\ep^{3/2}, 1)$, we have
    \begin{align}
    \label{eq:Q_bcp}
        \ov{\mathbb{Q}}_{\gd(t)}(\tilde{A}(t)) = 2 \Phi \left( \frac{\ep h(1-t)}{\sqrt{t}} \right) - 1  = \sqrt{\frac{2}{\pi t}} \ep h(1-t) + o\left(\frac{\ep}{\sqrt{t}} \right)
    \end{align}
as $\ep \rightarrow 0$, see e.g.\ (1.1.4) in \cite{Borodin_etal_xx02}, p.~153.

\textbf{Step 3:} We now divide the right-hand side of \eqref{eq:split_tau} by $\ep$ and take the limit as $\ep \rightarrow 0$. Let $c$ denote an upper bound for the density $\Pr_x(1-\tau \in dt)/dt$ on $[0,1]$ (which can be obtained, for example, using Theorem 3.1 of \cite{Downes_etal_xx08}). Then we clearly have
    \[  J_1 \leq c \int_{0}^{\ep^{3/2}} dt = c \ep^{3/2} = o(\ep).  \]
Using \eqref{eq:final_form} and \eqref{eq:Q_bcp}, we see that
    \begin{align}
    \label{eq:pre_lim_dif}
        \frac{J_2}{\ep} &= \sqrt{\frac{2}{\pi}} \int_{\ep^{3/2}}^{1} h(1-t) \ov{\Exp}_{\gd(t)} \Big[ e^{ G\left( X_{t}+ g_{\ep,t}(t) \right) - G(g(1-t)) - X_{t}  g_{\ep,t}'(t) + \ov{N}_{\ep,t}(0,t) } \,\Big|\, \tilde{A}(t) \Big]\notag\\
            & \qquad \qquad\qquad \qquad\qquad \qquad\qquad \qquad \qquad \qquad \times \frac{\Pr_x(1-\tau \in dt)}{\sqrt{t}} (1 + o(1)).
    \end{align}
Using the weak convergence result of Theorem~2.1 in \cite{Durrett_etal1_0277}, together with assumption \eqref{eq:Q2_def} and a similar uniform integrability argument to the one used in the proof of Theorem~\ref{th:asymp_holds} (see \eqref{eq:split_exp}--\eqref{eq:final_unif_int}; that the right-hand side in condition \eqref{eq:Q2_bound} differs from that in \eqref{eq:Q_bound} is due to our dealing with the maximum of the Brownian motion, while in Theorem~\ref{th:asymp_holds} we dealt with that of the Brownian bridge --- which has a thinner distribution tail), we have that, as $\ep \rightarrow 0$, the expectation in \eqref{eq:pre_lim_dif} converges to the expectation under which the process $(-X_s)$ is the Brownian meander on $[0,t]$ (that is, the Brownian motion $(W_s, 0 \leq s \leq t)$ conditioned to remain positive on $[0,t]$). If we denote this process by $\big(W_s^{+, (t)}\big)$, then, by the Brownian scaling, we have
    \begin{align*}
        W_s^{+, (t)} \stackrel{d}{=} \sqrt{t} \, W_{s/t}^{+}.
    \end{align*}
Thus the expectation on the right-hand side of \eqref{eq:pre_lim_dif} converges to the expectation in the statement of Theorem~\ref{thm:diff_bdy}, which completes the proof of Theorem~\ref{thm:diff_bdy} for $h \geq 0$.

\textbf{Step 4:} Now consider a general $h(t)$ satisfying the conditions of the theorem. Using the standard notation $y^- = -\min\{y, 0\}$ for the negative part of $y$, we have
    \begin{align*}
        P(g + \ep h) - P(g) = \big(P(g + \ep h) - P(g - \ep h^-)\big) -\big( P(g) - P(g - \ep h^-) \big).
    \end{align*}
Each of the two terms on the right-hand side can then be evaluated as per Steps 1--3, the ``lower" of the two boundaries (originally it was $g$ due to the assumption $h \geq 0$) now being $g - \ep h^-$. Next we observe that the main terms in the respective expressions for $J_2/\ep$ are continuous functions of $\ep$, the limits of them having the form of the right-hand side of \eqref{eq:deriv_expr} with $h(t)$ replaced with $\max\{h(t), 0\}$ and $\max\{-h(t), 0\}$. This completes the proof of Theorem~\ref{thm:diff_bdy}.

%%%%%%%%%%%%%%%%%%%%%%%%%%%%%%%%%%%%%%%%%%%%%%%%%%%%%
%
%             EXAMPLES
%
%%%%%%%%%%%%%%%%%%%%%%%%%%%%%%%%%%%%%%%%%%%%%%%%%%%%%

\section{Examples}
\label{sec:examples}

{\em Example {\em 1}: Brownian Motion and a Linear Boundary.} In this example we will illustrate the results of Theorems~\ref{th:relate_probs} and \ref{thm:diff_bdy} in the case of the Brownian motion $(W_s)$ when $g(s)$ is linear, with $g(0) >x$. Recall that \eqref{eq:bm_lin_bdy} shows that the conditions of Theorem~\ref{th:relate_probs} are satisfied with $f(t,x) = \frac{2}{t} (g(0)-x)$. Thus the theorem implies that
    \begin{align*}
        p_{\tau}(t) &= \frac{1}{2} \frac{2}{t}(g(0)-x) p(t,x,g(t)) = \frac{1}{t}(g(0)-x) \frac{1}{\sqrt{2 \pi t}} e^{-(g(t)-x)^2/(2t)},
    \end{align*}
which is a well known result, being a special case of Kendall's formula \eqref{eq:ken_rel}, see e.g.\ (1.1.4) of \cite{Borodin_etal_xx02}, p.~250.

Next assume that $g(t) = a_1 + b_1 t$ and $h(t) = a_2 + b_2 t$, $0 \leq t \leq 1$, where $b_1, a_2, b_2 \in \mathbb{R}$, and, without loss of generality, $a_1 >0$. Then the assertion of Theorem~\ref{thm:diff_bdy} reduces to
        \begin{align}
        \label{eq:deriv_eg}
            \lim_{\ep \rightarrow 0}  \;\ep^{-1} \Big[&P(g + \ep h ) - P(g) \Big] = \sqrt{\frac{2}{\pi}}  \int_0^{1} \frac{a_2 + b_2(1-t)}{\sqrt{t}}\Pr_x(1-\tau \in dt) \Exp \Big[ e^{ \sqrt{t} b_1 W_{1}^+  -\frac{1}{2} b_1^2 t } \Big].
        \end{align}
On the other hand, we know (see e.g.\ (1.1.4) on p.~250 in \cite{Borodin_etal_xx02}) that, for the boundary $a + b t$, $a,b >0$,
              \begin{equation}
            \begin{array}{rl}
                P(-\infty, a + b t) &= \ov{\Phi} \left(-a -b \right) + e^{-2 ba} \Phi \left( -a + b \right),\\
                \vphantom{.}\\
                            p_{\tau}(t) &= \frac{a}{\sqrt{2 \pi} t^{3/2}} \exp \left\{ -\frac{(a+bt)^2}{2 t} \right\},
            \end{array}
          \label{eq:bm_lin_asymp}
            \end{equation}
while, from e.g.\ (1.1) in \cite{Durrett_etal1_0277}, we have that
    \begin{align}
    \label{eq:mean_dens}
        \Pr (W^+_1 \in dy) = y e^{-y^2/2}   dy, \qquad y>0.
    \end{align}
In the special case $b_1 = b_2 = 0$, we can evaluate both sides of \eqref{eq:deriv_eg} and confirm they each give
    \[  \sqrt{\frac{2}{\pi}} a_2 e^{-a_1^2/2}.  \]
In the general case, we use \eqref{eq:mean_dens} to evaluate the required Laplace transform:
    \begin{align}
    \label{eq:mean_lt}
        \Exp[e^{\gl W_1^+}] = 1 + \sqrt{2 \pi} \gl e^{\gl^2/2} \ov{\Phi}(-\gl), \qquad \gl \in \mathbb{R}.
    \end{align}
Using \eqref{eq:bm_lin_asymp} and \eqref{eq:mean_lt}, in this case \eqref{eq:deriv_eg} is equivalent to the relation
    \begin{align}
    \label{eq:reduce_deriv_eg}
            a_2\sqrt{\frac{2}{\pi}}& e^{-(a_1 + b_1)^2/2} + 2(a_2 b_1 + a_1 b_2)e^{-2 a_1 b_1} \Phi(b_1 - a_1)\notag\\
                & \qquad= \frac{a_1}{\pi} \int_0^{1} \frac{a_2 + b_2(1-t)}{\sqrt{t} (1-t)^{3/2}} \exp \left\{-\frac{(a_1 + b_1 (1-t))^2}{2 (1-t)} -\frac{b_1^2 t}{2}  \right\}  \notag\\
                &\qquad\qquad\qquad\qquad\times \left(1 + \sqrt{2 \pi t} b_1 e^{t b_1^2/2} \ov{\Phi}(-\sqrt{t} b_1) \right) dt.
    \end{align}
It is not immediately clear that this identity holds true. However, for given values of
$a_1$, $a_2$, $b_1$ and $b_2$, we can numerically integrate the right-hand side of
\eqref{eq:reduce_deriv_eg} to confirm \eqref{eq:deriv_eg} holds for these values. For
example, using the values $a_1 = a_2 = b_1 = b_2=1$, both sides of
\eqref{eq:reduce_deriv_eg} give $0.379$. Alternatively, for the values $a_1 = 1$, $a_2
= -0.5$, $b_1 = -1,$ $b_2=2$ (in which case the ``increment'' $h(t)$ assumes values of
both signs on $[0,1]$), both sides of \eqref{eq:reduce_deriv_eg} give $0.442$.

{\em Example {\em 2}: Brownian Motion and a Daniels Boundary.} A general Daniels boundary is given by
    \begin{align}
    \label{eq:dan_bdy}
    g_D(s) := \gd - \frac{s}{2\gd} \log \left( \frac{\gk_1}{2} + \sqrt{\frac{1}{4} \gk_1^2 + \gk_2 e^{-4\gd^2/s} } \right),
  \end{align}
where $\gd \ne 0$, $\gk_1 >0$ and $\gk_2 \in \mathbb{R}$ are subject to $\gk_1^2 + 4 \gk_2 >0$. For this boundary, the first crossing time density is given by
  \begin{align*}
    p_{\tau}(t) = \frac{1}{\sqrt{2 \pi} t^{3/2}} \left( \gd \gk_1 e^{-(g_D(t) - 2\gd)^2/(2t)} + 2\gd\gk_2 e^{-(g_D(t) - 4\gd)^2/(2t)}\right)
  \end{align*}
(see \cite{Daniels_0696} for further information). Theorem~\ref{th:relate_probs} therefore gives
    \begin{align}
    \label{eq:dan_ftx}
        f(t,x) = \frac{2}{t} \left( \gd \gk_1 e^{-\frac{1}{2} (2 \gd -x) (2 \gd + x - 2 g_D(t))}+ 2 \gd \gk_2 e^{-\frac{1}{2t} (4 \gd - x) (4 \gd + x - 2g_D(t))} \right).
    \end{align}

Figure~\ref{fig:sim_cond_cross} shows estimated values of the probability
    \begin{align}
    \label{eq:cond_prob}
        \Pr_x\left(\sup_{0 \leq s \leq t} (W_s - g_D(s)) < 0 \,\Big|\, W_t = z\right),
    \end{align}
using simulation, as a function of $g_D(t) - z$. We also estimate the value of $f(t,x)$ predicted by this simulation (using only the simulated values at the points $0 \leq g_D(t) - z \leq 0.1$) and plot the corresponding linear function for comparison. The parameter values used are $\gd =\gk_1 = \gk_2 = 0.5$, $t=1$ and $x=0$. Simulation of the Brownian bridge process was performed using the fact that if $(B_s)$ is a Brownian motion conditioned on $B_t = z$, $B_0 = x$, then
    \[  B_s \stackrel{d}{=} x + \frac{s}{t} \big( (z-x) - W_t \big) + W_s, \qquad 0 \leq s \leq t,  \]
see e.g.\ \cite{Borodin_etal_xx02}, pp.~64. We used $10^4$ simulations for each value of $g_D(t) - z$; the step size is $10^{-4}$ for $0 \leq t \leq 0.99$ and $10^{-5}$ for $0.99 \leq t \leq 1$ (we use a smaller step size closer to $t=1$ as this region is prone to the most simulation error).
\begin{figure}[!htb]
        \begin{centering}
            \includegraphics{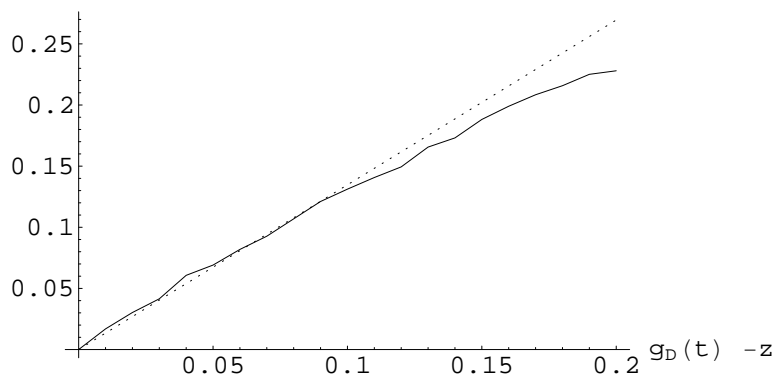}
        \caption{Simulated values of \eqref{eq:cond_prob} as a function of $g_D(t) - z$ for the Daniels boundary \eqref{eq:dan_bdy} with $\gd =\gk_1 = \gk_2 = 0.5$ using $x=0$ and $t=1$ (solid line). The dotted line shows the linear function corresponding to the regression of the values for $0 \leq g_D(t) - z \leq 0.1$.}
    \label{fig:sim_cond_cross}
    \end{centering}
\end{figure}
The regression gives the estimated value of $f(1,0)$ as $1.35$, in comparison \eqref{eq:dan_ftx} gives $f(1, 0) \approx 1.33$. The slightly higher value from simulations is to be expected since simulating continuous stochastic processes using discrete simulation points over-estimates the probability the process will stay below a given boundary, as the simulated process is unable to cross the boundary between two consecutive time steps.

Note that the results of Corollary~\ref{cor:fpt_rels} immediately give the form of the first passage time density for the conditional process in this example.

{\bf Acknowledgements:} This research was supported by the ARC Centre of Excellence for Mathematics and Statistics of Complex Systems, and ARC Discovery Grant DP 0880693.

\end{document}